\newtheorem{thm}{Theorem}
\newtheorem{cor}[thm]{Corollary}
\newtheorem{lem}[thm]{Lemma}
\newtheorem{prop}[thm]{Proposition}
\newtheorem{rem}[thm]{Remark}
\def\dto{\dashrightarrow}
\newcommand{\llar}{-\kern-5pt-\kern-5pt\longrightarrow}
\newcommand{\surjects}{\twoheadrightarrow}
\def\leq{\leqslant}
\def\geq{\geqslant}
\def\Bc{{\mathcal{B}}}
\def\RR{{\mathbb{R}}}
\newcommand\ZZ{{\mathbb Z}}
\newcommand\PP{{\mathbb P}}
\newcommand\Rc{{\mathcal R}}
\newcommand\Jc{\mathcal J}
\newcommand\Fc{\mathcal F}
\newcommand{\pp}{\mathfrak{p}}
\DeclareMathOperator\sat{sat}
\DeclareMathOperator\Syz{Syz}
\def\pp{{\mathfrak{p}}}
\def\qq{{\mathfrak{q}}}
\def\coker{{\mathrm{coker}}}
\def\rank{{\mathrm{rank}}}
\def\Homgr{{\mathrm{Homgr}}}
\def\ind{{\mathrm{in}}}
 \def\ff{{\bf f}}
 \def\gg{{\bf g}}
 \def\hh{{\bf h}}
 \def\xx{{\bf x}}
 \def\yy{{\bf y}}
 \def\zz{{\bf z}}
 \def\PP{{\bf P}}
\newcommand{\fn}{\mathfrak{n}}
\newcommand{\fa}{\mathfrak{a}}
\newcommand{\fb}{\mathfrak{b}}
\newcommand{\fc}{\mathfrak{c}}
\title{Effective criteria for bigraded birational maps}
\author[N. Botbol]{Nicol\'{a}s Botbol}
\address{Departamento de Matem\'atica, FCEN, Universidad de Buenos Aires, Argentina }
\email{nbotbol@dm.uba.ar}
\urladdr{http://mate.dm.uba.ar/~nbotbol/}
\author[L. Bus\'e]{Laurent Bus\'e}
\address{INRIA Sophia Antipolis
EPI Galaad, 
2004 route des Lucioles,
06902 Sophia Antipolis, France.}
\email{Laurent.Buse@inria.fr}
\urladdr{http://www-sop.inria.fr/members/Laurent.Buse/}
\author[M. Chardin]{Marc Chardin}
\address{Institut de Math\'ematiques de Jussieu.
UPMC,  4 place Jussieu,
75005 Paris, France}
\email{marc.chardin@imj-prg.fr}
\urladdr{http://webusers.imj-prg.fr/~marc.chardin/}
\author[S. H. Hassanzadeh]{Seyed Hamid Hassanzadeh}
\address{Instituto de Matematica, Universidade Federal do Rio de Janeiro, Brazil.}
\email{hamid@im.ufrj.br}
\author[A. Simis]{Aron Simis}
\address{Departamento de Matem‡tica
Universidade Federal de Pernambuco
CEP 50740-560, Recife, Pernambuco, Brazil.}
\email{aron@dmat.ufpe.br}
\urladdr{www.abc.org.br/~aron}
\author[Q. H. Tran]{Quang Hoa Tran}
\address{Hue University's College of Education, 34 Le Loi St., Hue City, Vietnam
\& Institut de Math\'ematiques de Jussieu.
UPMC, 4 place Jussieu,
75005 Paris, France}
\email{quang-hoa.tran@imj-prg.fr}
\urladdr{http://webusers.imj-prg.fr/~quang-hoa.tran/}
\date{\today}
\begin{document}

\maketitle

\begin{abstract}
	In this paper, we consider rational maps whose source is a product of two subvarieties, each one being embedded in a projective space. Our main objective is to investigate birationality criteria for such maps.  First, a general criterion is given in terms of the rank of a couple of matrices that became to be known as \emph{Jacobian dual matrices}. Then, we focus on rational maps from $\PP^1\times \PP^1$ to $\PP^2$ in very low bidegrees and provide new matrix-based birationality criteria by analyzing the syzygies of the defining equations of the map, in particular by looking at the dimension of certain bigraded parts of the syzygy module. Finally, applications of our results to the context of geometric modeling are discussed at the end of the paper. 
\end{abstract}


\section{Introduction}

 A rational map $\Fc:\PP^r \dasharrow \PP^s$ between projective spaces is defined by an ordered set of homogeneous polynomials $\ff:=\{f_0,\ldots,f_s\}$  in $r+1$ variables, of the same degree and not all zero. The problem of providing sufficient conditions for such a map $\Fc$ to be birational has attracted much interest in the past and it is still an active area of research. For computational purposes, methods  based  on the nature of the syzygies of $\ff$ are the most suitable in the sense of effective results in the usual implementation of the Gr\"{o}bner basis algorithm. This syzygy-based approach goes back to \cite{SHK} where sufficient conditions for birationality were given in the case $r=s$. Then, several improvements have been introduced in relation with the equations of the symmetric and the Rees algebras of the ideal generated by $\ff$ \cite{RS,S}, including in arbitrary characteristic \cite{DHS}, and also in relation with  the fibers of $\Fc$ \cite{EU}.    

\medskip
 
In this paper, we aim to extend some of these methods and techniques to the context of rational maps whose source is a product of two projective spaces $\PP^n\times \PP^m$ instead. These maps are defined by an ordered set of bihomogeneous polynomials in two sets of $n+1$ and $m+1$ variables, respectively. 
 For the sake of emphasis, we call them {\em bigraded rational maps}. 
 Modern important motivation for considering bigraded rational maps comes from the field of geometric modeling. Indeed, the geometric modeling community uses almost exclusively bigraded rational maps for parameterizing surfaces,  dubbing such maps  {\em rational tensor-product B\'ezier parameterizations}. It turns out that an important property is to guarantee the birationality of these parameterizations onto their images. An even more important property is to preserve this birationality property during a design process, that is to say when the coefficients of the defining polynomials are continuously modified. As a first attempt to tackle these difficult problems, we will analyze in detail birational maps from $\PP^1\times \PP^1$ to $\PP^2$ in low bidegrees by means of syzygies.

\medskip

Through its various sections, this paper traverses topics from algebra to geometry and to modeling. In Section \ref{sec:general-criterion}, a general criterion for characterizing bigraded birational maps is proved by means of algebraic tools. It is based on the rank of two matrices, called \emph{Jacobian dual matrices}, that are built from some particular equations of the Rees algebra of the bihomogeneous equations defining the rational map. This criterion is actually an analogue of the existing Jacobian dual criterion of rational maps between varieties embedded in projective spaces \cite{RS,S,DHS}. 
  
In Section \ref{sec:lowdegdim}, we turn to a more geometric language since the bigraded rational maps are investigated through the properties of their base locus. By focussing on bigraded birational maps from $\PP^1\times \PP^1$ to $\PP^2$, we obtain very simple birationality criteria in bidegree $(1,1)$ and bidegree $(1,2)$ in terms of the dimension of some bigraded parts of the syzygies of the equations defining the rational map. Another important contribution of our work is a detailed study of the case of bidegree (2,2) maps for which we provide a complete listing of possible birational maps.

Finally, in Section \ref{sec:applications} we investigate applications of our results to the field of modeling. In particular, for bigraded plane rational maps of bidegree (1,1) and (2,1) we explain how some particular coefficients of the map, called the \emph{weights} of the parameterization, can be tuned in order to obtain a birational map. It is important to notice that the inverse map is then given by explicit minors from the matrix characterizing the birationality of the map. In the bidegree (2,1) case, our new birationality criterion allows to assign the control of this tuning to a structured low-rank matrices approximation algorithm, in the context of numerical computations.

\section{General birationality criterion}\label{sec:general-criterion}

In this section, we provide a general effective criterion for birationality of a bigraded rational map with source a biprojective space $ \PP^n\times  _k\PP^m$. We will state the results under  more general hypotheses, namely, when the source is a product $X\times_k Y$, where $X\subset \PP^n$, $Y\subset \PP^m$ denote non-degenerate irreducible  projective varieties over an algebraically closed field $k$.  The criterion is an analogue of the so-called Jacobian dual criterion which has been studied so far in the context of a rational maps between varieties embedded in projective spaces \cite{RS, S,DHS}.

\subsection{Birationality and bigraded Rees algebras}

As in the case of a rational map between projectively embedded varieties, where the notion of the graph of the map is encoded in taking the Rees algebra of an equigenerated homogeneous base ideal, a rational map with source a multi-projectively embedded variety and target a projectively embedded variety has a graph encoded in taking the Rees algebra of an equigenerated multihomogeneous ideal.
As for a rational map with source a projectively embedded variety and target a  multi-projectively embedded variety, the algebraic object that conveniently encodes the graph is a multi-Rees algebra -- i.e., the Rees algebra of a module which is the direct sum of a finite set of equigenerated homogeneous ideals of various degrees.

Although valid in the arbitrary multigraded case, for simplicity, we state it in the biprojective case.
Thus, let $X\subset \PP^n$, $Y\subset \PP^m$ and $Z\subset \PP^s$ denote non-degenerate irreducible  projective varieties over an algebraically closed field $k$. Let $A=k[\xx]=k[x_0,\ldots,x_n]/\fa$, $B=k[\yy]=k[y_0, \ldots,y_m]/\fb$ and $S=k[\zz]=k[z_0,\ldots, z_s]/\fc$ stand for the respective homogeneous coordinate rings. We also denote $R:=A\otimes_k B\simeq k[\xx,\yy]/(\fa,\fb)$. A rational map $\mathcal{F}:X\times Y \dto Z$ is  defined by bihomogeneous
polynomials $f_0(\xx,\yy),\ldots,f_s(\xx,\yy)$ in $R$ of fixed bidegree $(a,b)$, not all zero.
We say that $\mathcal{F}$ is birational with image $Z$ if it is dominant and admits an inverse rational map with image $X\times_kY$. Note that the inverse map is necessarily given by a pair of rational maps $Z\dto X$ and  $Z\dto Y$ defined by homogeneous polynomials $\gg:=\{g_0,\ldots,g_n\}$ and $\hh:=\{h_0,\ldots,h_m\}$ of fixed degrees $d_1$ and $d_2$, respectively.

\begin{lem}\label{Lgraph}
With the above notation, set $I:=(f_0(\xx,\yy),\ldots,f_s(\xx,\yy))\subset R$ and $J_1:=(\gg)$, $J_2:=(\hh)$.
Then the identity map on $k[\xx,\yy,\zz]$ induces a $k$-algebra isomorphism between the Rees algebra $\Rc_R(I)$ and the multi-Rees algebra $\Rc_S(J_1\oplus J_2)$.
\end{lem}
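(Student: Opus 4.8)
The plan is to exhibit the isomorphism concretely through the graph of $\mathcal{F}$. Recall that both Rees algebras in question are naturally realized as subalgebras of, or quotients of polynomial rings over, $R$ and $S$ respectively, and the assertion is that the "identity on the $z$-variables" matches them up. First I would set up the graph $\Gamma \subset (X\times Y)\times Z$ of the birational map $\mathcal{F}$: it is the closure of the locus $\{(p, q, \mathcal{F}(p,q))\}$ over the open set where $\mathcal{F}$ is defined. The key observation is that $\Gamma$ admits two descriptions. On one hand, projecting to $X\times Y$, the blow-up description gives $\Gamma = \Biproj \Rc_R(I)$, where $I = (f_0,\dots,f_s)$; this is the standard identification of the graph of a rational map with the Rees algebra of its base ideal, now in the bigraded setting, and the $z_i$ correspond to the generators $f_i$ of $I$ in Rees degree one. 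On the other hand, since $\mathcal{F}$ is birational, the graph $\Gamma$ is symmetric: it is also the graph of the inverse map $\mathcal{F}^{-1}\colon Z \dto X\times Y$. Now $\mathcal{F}^{-1}$ is given by the pair $(\gg,\hh)$, i.e. by a single morphism to $X\times Y \subset \PP^n\times\PP^m$ whose two components have base ideals $J_1 = (\gg)$ and $J_2=(\hh)$. The graph of such a map into a product is precisely $\Biproj$ (or the relevant multi-$\Proj$) of the multi-Rees algebra $\Rc_S(J_1\oplus J_2)$, with the $x_i$ corresponding to the generators $g_i$ and the $y_j$ to the generators $h_j$, each in its own Rees degree one.

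Having both descriptions, I would then upgrade the equality of the underlying schemes to an isomorphism of the coordinate algebras compatible with the claimed identification of variables. The clean way is to work on the level of presentations: write $\Rc_R(I)$ as a quotient of $R[z_0,\dots,z_s] = k[\xx,\yy,\zz]/(\fa,\fb)$ by sending $z_i \mapsto f_i t$, and write $\Rc_S(J_1\oplus J_2)$ as a quotient of $S[x_0,\dots,x_n,y_0,\dots,y_m] = k[\xx,\yy,\zz]/\fc$ by sending $x_i\mapsto g_i t_1$, $y_j \mapsto h_j t_2$. Both are therefore quotients of $k[\xx,\yy,\zz]$ by prime ideals — namely the ideals of the graph $\Gamma$ — and the content of the lemma is that these two prime ideals coincide. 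That in turn follows from the scheme-theoretic identification of $\Gamma$ above together with the fact that these Rees algebras are domains (the base ideals have positive grade, as $I$, $J_1$, $J_2$ are non-zero ideals in domains), so each presentation ideal is exactly the vanishing ideal of the corresponding image in $\Spec k[\xx,\yy,\zz]$; irreducibility of $X$, $Y$, $Z$ guarantees these images are irreducible and hence determined by their support.

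The main obstacle I anticipate is matching up the \emph{multigradings} correctly, not the set-theoretic equality. One has to check that under the identity map on $k[\xx,\yy,\zz]$ the natural $\ZZ^2$-grading on $\Rc_R(I)$ (inherited from the bigrading of $R$, with $z_i$ of bidegree $(a,b)$ after the appropriate normalization, or with the Rees variable carrying the grading) is carried to the $\ZZ^2$-grading on $\Rc_S(J_1\oplus J_2)$ in which $x_i$, $y_j$ record the two Rees degrees of the multi-Rees algebra. Concretely, a homogeneous element of $\Rc_R(I)$ of bidegree $(\alpha,\beta)$ and Rees degree $d$ should correspond to an element of $\Rc_S(J_1\oplus J_2)$ whose bidegree in the $(\xx,\yy)$-variables is $(\alpha,\beta)$ and this must be consistent with the defining relations $f_i(g_\bullet, h_\bullet)$ and $g_i(f_\bullet)$, $h_j(f_\bullet)$ that express the mutual inversion of $\mathcal{F}$ and $\mathcal{F}^{-1}$ (these relations live in the presentation ideal and force the birational identities $\gg\circ\mathcal{F} \sim \id$, $\hh\circ\mathcal{F}\sim\id$, etc.). I would handle this by tracking a single generator at a time through the birational correspondence, using that a birational map and its inverse induce mutually inverse isomorphisms of function fields, so that the pullback of $z_i$ is a well-defined element of $k(X\times Y)$ expressible in the $f$'s and, conversely, the pullback of $x_i$, $y_j$ are expressible in the $g$'s and $h$'s; chasing these identifies the two Rees algebras inside the common function field $k(\Gamma) = k(X\times Y) = k(Z)$, and the grading bookkeeping then becomes a finite, if slightly tedious, verification.
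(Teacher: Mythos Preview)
Your approach is correct in outline and is genuinely different from the paper's. You argue geometrically: identify both algebras with the (multi)homogeneous coordinate ring of the graph $\Gamma\subset\PP^n\times\PP^m\times\PP^s$, use that $\Gamma$ is simultaneously the graph of $\mathcal{F}$ and of $\mathcal{F}^{-1}$, and then appeal to primality on both sides to conclude that the two presentation ideals in $k[\xx,\yy,\zz]$ coincide. The paper instead argues purely algebraically and by hand: it writes $\Rc_R(I)=k[\xx,\yy,\zz]/(\fa,\fb,\Jc)$ and $\Rc_S(J_1\oplus J_2)=k[\xx,\yy,\zz]/(\fc,\Jc')$, observes that $\fc\subset\Jc$ and $\fa,\fb\subset\Jc'$, and then proves the mutual inclusions $\Jc'\subset(\fa,\fb,\Jc)$ and $\Jc\subset(\fc,\Jc')$ directly. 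The key step is elementary substitution: for $F\in\Jc'$ one plugs $\zz\mapsto\ff$ and uses the identities $\gg(\ff)=\xx\,D$, $\hh(\ff)=\yy\,D'$ (for nonzero forms $D,D'\in R$) coming from the inverse, together with the fact that $R$ is a domain, to get $F(\ff,\xx,\yy)=0$ in $R$, i.e.\ $F\in(\fa,\fb,\Jc)$.

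What each approach buys: the paper's argument is entirely self-contained and sidesteps any need to know in advance that the multi-Rees algebra of $J_1\oplus J_2$ really computes the graph of a map into a product (you assert this as ``precisely $\Biproj$ of the multi-Rees algebra,'' which is true but is itself the bigraded analogue of the very statement being proved). It also never needs to pass through function fields or worry about whether equal multi-Proj's force equal saturated prime ideals. Your approach, on the other hand, explains \emph{why} the lemma is true and makes the symmetry between $\mathcal{F}$ and $\mathcal{F}^{-1}$ transparent; the grading worry you flag is real but, as you suspect, is only bookkeeping once one notes that both kernels are trihomogeneous prime ideals not containing any irrelevant ideal, so they are determined by the common closed subscheme $\Gamma$ they cut out.
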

\begin{proof} The proof is tailored on the one in \cite[Theorem 2.1]{S} (see also \cite[Theorem 2.18]{DHS}).
Consider a polynomial presentation
$$R[\zz]=\frac{k[\xx,\yy]}{(\fa,\fb)}[\zz]\surjects R[{\ff}t]=\frac{k[\xx,\yy]}{(\fa,\fb)}\,[{\ff}t]=\Rc_R(I),\;\;
 z_k\mapsto {f_k}t$$
whose restriction to $R=k[\xx,\yy]/(\fa,\fb)$ is the identity.
Let $\overline{\mathcal{J}}=(\Jc,\fa,\fb)/(\fa,\fb)$ denote the kernel, with $\Jc$ the ideal generated by the $\zz$-homogeneous polynomials, with bihomogeneous polynomial coefficients in $k[\xx,\yy]$, vanishing on $f_0(\xx,\yy),\ldots,f_s(\xx,\yy)$ modulo $(\fa,\fb)$.

Note that $\fc\subset \Jc$.
Indeed, taking $\ff$ as homogeneous polynomials for the total degree of their fixed bidegree, it is clear that the image $Z$ is identified with ${\rm Proj}(k[\ff])$ up to degree normalization.
Since the two algebras $k[\ff]$ and $k[\ff t]$ are $k$-isomorphic as graded algebras and $\ker (k[\zz]\surjects k[\ff t])\subset \Jc$, we are through.
In particular, the Rees algebra  $\Rc_R(I)$ is a residue $k$-algebra of $R[\zz]/\bar{\fc}=k[\xx,\yy,\zz]/(\fa,\fb,\fc)$.

By the same token, one has
$$\Rc_S(J_1\oplus J_2)\simeq S[\gg u,\hh v]\simeq k[\zz][\xx,\yy]/(\fc,\Jc'),$$
 where  $\Jc'$ is generated by those $\xx,\yy$-bihomogeneous polynomials with homogenous  coefficients in $k[\zz]$ vanishing on both sets $\gg$ and $\hh$ modulo $\fc$.
Similarly, both $\fa$ and $\fb$ are contained in $\Jc'$ -- for example, note for this, that a form of degree $d$ in $k[\xx]$ is a bihomogeneous polynomial in $k[\xx,\yy]$ of bidegree $(d,0)$ with homogeneous coefficients in $k[\zz]$ of degree $0$.

Thus, $\Rc_S(J_1\oplus J_2)$ is  a residue $k$-algebra of $S[\xx,\yy]/(\fa,\fb)=k[\xx,\yy,\zz]/(\fa,\fb,\fc)$ as well.

We now claim that the identity map of $k[\xx,\yy,\zz]/(\fa,\fb,\fc)$ induces the required $k$-algebra isomorphism, for which it suffices now to show that $\Jc'\subset (\fa,\fb,\Jc)$ and that $\Jc\subset (\fc,\Jc')$.

Let $F(\zz,\xx,\yy)=\sum_i p_i(\zz)\xx^{\alpha_i}\yy^{\beta_i} \in \Jc'$, where $|\alpha_i|=p$ and $|\beta_i|=q$ for all $i$. By the definition of $\Jc'$, one has $F(\zz,\gg,\hh)\in \fc$. Therefore,  thus $F(\ff,\gg,\hh)=0\in R$. On the other hand since the pair $(\gg;\hh)$
defines the inverse map to $\mathcal F$, there exist forms $D$ and $D'$ in $R\setminus{0}$, perhaps of different degrees, such that $\gg(\ff)=\xx\,D$ and $\hh(\ff)=\yy\,D'$. It follows that   $F(\ff,\gg,\hh)=(\sum_i p_i(\ff)\xx^{\alpha_i}\yy^{\beta_i})D^a D'^b$.
Since $R$ is an integral domain, the  vanishing of the latter shows that $F(\ff,\xx,\yy)=0$ on $R$. In particular $F \in (\fa,\fb,\Jc)$.

The other inclusion is obtained by a similar argument.
\end{proof}

\subsection{Bi-graded Jacobian dual criterion}
We are now ready to present a multiprojective version of the Jacobian dual criterion of birationality. For simplicity, we stick to the biprojective case, as the arbitrary multiprojective case requires only a small set of changes.

We will focus on the presentation ideal $(\fa,\fb,\Jc)\subset k[\xx,\yy,\zz]$ of the Rees algebra $\Rc_R(I)$.
Consider the elements of degrees $(1,0,\ast)$  and $(0,1,\ast)$ in  $(\fa,\fb,\Jc)$, where $\ast$ denotes an arbitrary degree in $\zz$. Since by assumption $X$ and $Y$ are non-degenerate
these elements belong to the graded pieces $\Jc_{(1,0,\ast)}$ and $\Jc_{(0,1,\ast)}$, respectively.
Now, a form of degree $(1,0,\ast)$ can be thought as a form of bidegree $(1,*)$ in $k[\xx,\zz]$.
Moreover, since $X$ is nondegenerate, each such form has a unique expansion of the shape $\sum_iQ_i(\zz)x_i$, where $Q_i(\zz)\in k[\zz]$ is homogeneous of degree $*$.
Considering these expansions for a minimal set of generating forms of the ideal $(\Jc_{(1,0,\ast)})$ and taking the corresponding matrix of $\xx$-derivatives yields a weak Jacobian dual matrix $\Psi_{\xx}$ in the sense of \cite[Section 2.3]{DHS} -- here dubbed an {\em $\xx$-partial Jacobian dual matrix}. We similarly introduce an {\em $\yy$-partial Jacobian dual matrix}  $\Psi_{\yy}$. Finally, thinking of these matrices as maps over $k[\zz]$, we denote by $\Psi_{\xx}\otimes _{k[\zz]}S$ and $\Psi_{\yy}\otimes _{k[\zz]}S$ the respective maps obtained modulo $\fc$.

\begin{thm}\label{Pjacobiandual} With the previous notation, the rational map  $\mathcal{F}:X\times Y \dto Z$   is birational with image $Z$ if and only if $\rank_S(\Psi_{\xx}\otimes _{k[\zz]}S)=n$
 and $\rank_S(\Psi_{\yy}\otimes _{k[\zz]}S)=m$.
In addition, both halves of the expression of the inverse of
$\Fc$ are given by {\rm (}signed{\rm )}  ordered maximal minors of an $n\times (n+1)$ submatrix of $\Psi_{\xx}$ and of an $m\times (m+1)$ submatrix of $\Psi_{\yy}$, respectively.

\end{thm}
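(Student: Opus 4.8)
The plan is to carry over the proof of the Jacobian dual criterion for rational maps between projective varieties \cite{S,DHS} to the bigraded setting, running the argument one variable-block at a time, with Lemma~\ref{Lgraph} supplying the structure of the presentation ideal $(\fa,\fb,\Jc)$ of $\Rc_R(I)$. Throughout I take $Z$ to be the closure of the image of $\Fc$, so that — exactly as in the proof of Lemma~\ref{Lgraph} — $S=k[\zz]/\fc$ is identified with the subalgebra $k[\ff]\subseteq R$, giving an injection $S\hookrightarrow R$, $\bar z_k\mapsto f_k$. The first observation, which holds unconditionally, is a \emph{fundamental relation}: each generating form of the ideal $(\Jc_{(1,0,\ast)})$ used to build $\Psi_{\xx}$ has the shape $\sum_i Q_i(\zz)x_i$ and vanishes on $\ff$ modulo $(\fa,\fb)$ (non-degeneracy of $X$ being what forces the degree $(1,0,\ast)$ part of $(\fa,\fb,\Jc)$ to lie in $\Jc$ and to have this shape); substituting $z_k=f_k$ gives $\sum_i Q_i(\ff)x_i=0$ in $R$, i.e. the matrix $\Psi_{\xx}$ evaluated at $\zz=\ff$ — which is precisely the image of $\Psi_{\xx}\otimes_{k[\zz]}S$ under $S\hookrightarrow R$ — annihilates the nonzero vector $(x_0,\dots,x_n)$. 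Since $R$ is a domain, its rank over $\mathrm{Frac}(R)$ is at most $n$, and ranks do not change under the field extension $\mathrm{Frac}(S)\hookrightarrow\mathrm{Frac}(R)$; as $S$ is a domain this yields $\rank_S(\Psi_{\xx}\otimes_{k[\zz]}S)\le n$, and likewise $\rank_S(\Psi_{\yy}\otimes_{k[\zz]}S)\le m$.

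Next I would treat the forward implication. Assume $\Fc$ is birational, with inverse given by $\gg=\{g_0,\dots,g_n\}$ and $\hh=\{h_0,\dots,h_m\}$, so that $\gg(\ff)=\xx D$ and $\hh(\ff)=\yy D'$ for nonzero $D,D'$. For each $i\ne j$ the form $x_ig_j(\zz)-x_jg_i(\zz)$ has bidegree $(1,0)$ in $(\xx,\yy)$ and vanishes on $\ff$, hence lies in $\Jc_{(1,0,d_1)}$; comparing bidegrees in an expression of it as a combination of the chosen generators of $(\Jc_{(1,0,\ast)})$ shows that its coefficient vector $g_je_i-g_ie_j$ (with respect to $x_0,\dots,x_n$) is a $k[\zz]$-combination of the rows of $\Psi_{\xx}$. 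The matrix assembled from these vectors has kernel over $\mathrm{Frac}(S)$ exactly the line spanned by $\gg$ — nonzero in $S$ since $\gg$ defines a rational map — hence rank $n$; therefore $\rank_S(\Psi_{\xx}\otimes_{k[\zz]}S)\ge n$, and together with the previous paragraph, $=n$. The same argument with $\hh$ gives $\rank_S(\Psi_{\yy}\otimes_{k[\zz]}S)=m$.

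For the converse, which also produces the inverse as claimed, assume the two rank equalities. I would choose $n$ rows of $\Psi_{\xx}$ spanning a rank-$n$ submatrix $\widetilde\Psi_{\xx}$ (an $n\times(n+1)$ matrix over $k[\zz]$) and let $\gg$ be its signed ordered maximal minors $(\Delta_0,-\Delta_1,\dots,(-1)^n\Delta_n)$; Laplace expansion of a determinant with a repeated row gives that $\widetilde\Psi_{\xx}$ annihilates $\gg$ identically over $k[\zz]$. Some $\Delta_j$ is nonzero in $S$, hence (via $S\hookrightarrow R$) nonzero after the substitution $\zz=\ff$, so $\widetilde\Psi_{\xx}$ evaluated at $\zz=\ff$ has rank exactly $n$ over $\mathrm{Frac}(R)$ and one-dimensional kernel; that kernel contains both $(x_0,\dots,x_n)$ (the fundamental relation) and $\gg(\ff)$ (the Laplace identity), so $\gg(\ff)=\xx D$ with $D\in\mathrm{Frac}(R)^{\times}$. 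Defining $\hh$ analogously from $\Psi_{\yy}$ gives $\hh(\ff)=\yy D'$, $D'\in\mathrm{Frac}(R)^{\times}$. For $p\in\fa$ one then has $p(\gg(\ff))=D^{\deg p}p(\xx)=0$ in $R$, so $p(\gg)\in\fc$; hence $\mathcal G:=(\bar\gg,\bar\hh)$ is a genuine rational map $Z\dto X\times Y$, and $\mathcal G\circ\Fc=\mathrm{id}_{X\times Y}$ because it sends $(x,y)$ to $\big((x_0:\dots:x_n),(y_0:\dots:y_m)\big)$. Since $\Fc$ is dominant onto $Z$, passing to function fields shows $\Fc^{\ast}$ is an isomorphism with inverse $\mathcal G^{\ast}$; thus $\Fc$ is birational with inverse $\mathcal G$, whose two halves are the announced maximal minors of submatrices of $\Psi_{\xx}$ and $\Psi_{\yy}$. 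The "inverse via minors'' assertion in the birational case of the statement then follows from this construction and the uniqueness of the inverse.

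The main obstacle I anticipate lies in the bigraded bookkeeping of the forward implication: verifying that the natural relations $x_ig_j-x_jg_i$ (and $y_ih_j-y_jh_i$) genuinely contribute to $\Psi_{\xx}$ (resp. $\Psi_{\yy}$) rather than dissolving among the other graded pieces of the Rees presentation ideal, and, relatedly, controlling the rank of $\widetilde\Psi_{\xx}$ specialized at $\zz=\ff$ over $\mathrm{Frac}(R)$ — which rests on the fact that a maximal minor nonzero in $S$ remains nonzero after $\zz=\ff$, i.e. on the injectivity of $S\hookrightarrow R$, equivalently on $Z$ being the image of $\Fc$. Once these points are secured, the rest is the familiar "one-dimensional kernel containing two explicit vectors'' manipulation.
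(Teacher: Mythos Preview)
Your proof is correct and close in spirit to the paper's, but the forward direction is organized differently. For the converse you argue via a one-dimensional kernel over $\mathrm{Frac}(R)$ after specializing $\zz=\ff$; the paper phrases the same step through the Hilbert--Koszul lemma \cite[Proposition~2.1]{DHS}, which places $\Delta_ie_j-\Delta_je_i$ in the column space of the chosen $n\times(n+1)$ submatrix over $k[\zz]$, hence $x_i\Delta_j-x_j\Delta_i\in\Jc$ --- the two formulations are equivalent. For the forward direction, however, the paper uses Lemma~\ref{Lgraph} in an essential way: the isomorphism of Rees algebras yields $\Jc_{(1,0,\ast)}=\Jc'_{(\ast,1,0)}$, so $\Psi_{\xx}\otimes S$ and the full syzygy matrix $\mathrm{Syz}_S(\gg)$ have the \emph{same} column space over $S$, whence rank exactly $n$. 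You instead bound the rank above by the ``fundamental relation'' and below by exhibiting only the Koszul relations $x_ig_j-x_jg_i\in\Jc_{(1,0,d_1)}$, whose coefficient vectors already span a rank-$n$ module over $S$. Your route is more elementary and does not invoke Lemma~\ref{Lgraph} at all; the paper's is more conceptual and actually establishes the stronger fact that $\Psi_{\xx}\otimes S$ recovers the entire syzygy module of $\gg$, not merely a rank-$n$ piece of it. The obstacle you flag --- that $x_ig_j-x_jg_i$ might ``dissolve'' among other graded pieces --- is not a real issue: since the chosen generators of $(\Jc_{(1,0,\ast)})$ lie in $\Jc_{(1,0,\ast)}$ and the ambient ideal is trihomogeneous, comparing bidegrees forces any element of $\Jc_{(1,0,d)}$ to be a $k[\zz]$-combination of them, exactly as you need.
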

\begin{proof}Suppose that $\mathcal{F}$ is birational with image $Z$.
By the proof of Lemma~\ref{Lgraph},  in particular $\Jc_{(1,0,\ast)}=\Jc'_{(\ast,1,0)}$ -- notice that $\ast$, $1$ and $0$ are the respective degrees in $\zz$,$\xx$ and $\yy$.
This implies that $\Psi_{\xx}$ can also be written in terms of $\Jc'_{(\ast,1,0)}$.
But, as such and due to the definition of $\Jc'$, we get an equality
$$
I_1([x_0\cdots x_n]\cdot(^t\Psi_{\xx}\otimes S))=I_1([x_0\cdots x_n]\cdot {\rm Syz}_S(\gg)),
$$ 
where $ {\rm Syz}_S(\gg)$ denotes the matrix of syzygies of  $(\gg)\subset S$. Since neither  $^t\Psi_{\xx}\otimes S$ nor ${\rm Syz}_S(\gg)$  involves any variables other than $\zz$, it then follows that these matrices define the same column space and hence have the same rank. But, clearly  $\rank_S({\rm Syz}_S(\gg))=n$. A similar argument applies to the $\yy$ part.

The proof of the converse statement is the same as the proof for the projective varieties in \cite{DHS}, with the obvious adaptation. Thus, let $M$ denote an $n\times (n+1)$ submatrix of $\Psi_{\xx}$ which is of rank $n$ over $S$. Let $\Delta_0(\zz),\cdots,\Delta_n(\zz)$ be its ordered signed minors. By the Hilbert--Koszul lemma \cite[Proposition 2.1]{DHS}, the vector $\Delta_i(\zz)e_j-\Delta_j(\zz)e_i$ belongs to the column space of $M$ and hence to that of $\Psi_{\xx}$. The fact that $I_1([x_0\cdots x_n]\cdot\Psi_{\xx})=(\Jc_{(1,0,\ast)})$ ensures that $x_i\Delta_j(\zz)-x_j\Delta_i(\zz)\in (\Jc_{(1,0,\ast)})$. In particular
$x_i\Delta_j(\ff)-x_j\Delta_i(\ff)=0$ in $R$. We claim that the $n+1$-tuple $(\Delta_0(\ff)\,\cdots\,\Delta_n(\ff))$ does not vanish on $R$.  To see this, recall that the homogeneous coordinate ring of the image of $\Fc$ is $k[f_0,\cdots,f_s]\simeq S$ (up to degree normalization). Since $S=k[\zz]/\fc$, then $\Delta_i(\ff)=0\in R$ if and only if $\Delta_i(\zz) \in \fc$, i.e., if and only if $\Delta_i(\zz)=0\in S$. But this cannot happen for all $i$ because $\rank(M\otimes_{k[\zz]}S)=n$.
It now follows that the rational map $Z\dto X$
defined  by $(\Delta_0(\zz):\cdots:\Delta_n(\zz))$ gives the first half of the inverse to $\Fc$.
The second half is treated entirely in the same way.
\end{proof}


\subsection{Linear syzygies and birationality}

Theorem \ref{Pjacobiandual} yields an explicit criterion for deciding if a given bigraded rational is birational. This criterion relies on Gr\"obner basis computations in order to get the equations of a Rees algebra. Therefore, it may suffer from limitations with complicated examples and especially, it does not allow to treat a family of rational maps at once (for Gr\"obner basis computations are not stable under change of basis). In order to work around these two drawbacks, we investigate how birationality can be detected by means of syzygies of the ideal $I$ generated by the coordinates of the rational maps, instead of the whole collection of equations of $I$. Indeed, any criterion based on some syzygies of given bidegree of $I$ will only rely on linear algebra computations.   

We will need to consider not just Rees algebras of ideals or multi-Rees algebras, but the full notion of the Rees algebra of a module as discussed in \cite{ram1}.

The following proposition gives an analogue of \cite[Theorem 3.2]{DHS}.

\begin{prop}\label{Plinear rank} Let $\mathcal{F}:\PP^n\times \PP^m \dto \PP^s$ stand for  a rational map defined by bihomogeneous
polynomials $f_0(\xx,\yy),\cdots,f_s(\xx,\yy)$ in $R:=k[\xx,\yy]$ and set $I:=(f_0(\xx,\yy),\cdots,f_s(\xx,\yy))$.
If the image of $\Fc$ has dimension $n+m$ and the submatrix of the syzygy matrix of $I$ consisting of columns of bidegrees $(1,0)$ and $(0,1)$ has rank $s$ {\rm (}maximal possible{\rm )}, then $\Fc$ is birational onto its image.
\end{prop}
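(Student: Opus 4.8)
The plan is to reduce Proposition~\ref{Plinear rank} to the Jacobian dual criterion of Theorem~\ref{Pjacobiandual} by producing enough elements of the presentation ideal of the Rees algebra $\Rc_R(I)$ in the critical bidegrees $(1,0,\ast)$ and $(0,1,\ast)$, exactly as in the projective case treated in \cite[Theorem~3.2]{DHS}. First I would recall the standard mechanism that converts a syzygy into an equation of the Rees algebra: if $(\ell_0(\xx,\yy),\dots,\ell_s(\xx,\yy))$ is a syzygy of $\ff$ of bidegree $(1,0)$, that is $\sum_i \ell_i f_i=0$ with each $\ell_i$ linear in $\xx$ and of degree $0$ in $\yy$, then $\sum_i \ell_i(\xx,\yy)\,z_i$ is an element of $\Jc$ of degree $(1,0,1)$; its $\xx$-expansion $\sum_j\big(\sum_i \frac{\partial \ell_i}{\partial x_j} z_i\big) x_j$ is precisely a row of the $\xx$-partial Jacobian dual matrix $\Psi_\xx$ (note that since the $\ell_i$ are linear in $\xx$, the coefficients $\sum_i \frac{\partial \ell_i}{\partial x_j}z_i$ are genuinely linear forms in $\zz$, so $\Psi_\xx$ here has linear entries). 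Symmetrically, syzygies of bidegree $(0,1)$ feed into $\Psi_\yy$. Thus the submatrix of the syzygy matrix of $I$ with columns of bidegree $(1,0)$, once reorganized by the $\xx$-expansion, is up to a linear change of the generating set exactly (a submatrix of) the transpose of $\Psi_\xx$, and likewise for $\yy$.

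Next I would translate the rank hypothesis. The hypothesis says the combined matrix of $(1,0)$- and $(0,1)$-syzygies has rank $s$ over $R$. Because the $(1,0)$-columns involve only $\xx$ (linearly) with $\zz$-linear coefficients after expansion, and the $(0,1)$-columns involve only $\yy$, the rank splits: write $r_1$ for the rank contributed by the $(1,0)$-block and $r_2$ for that of the $(0,1)$-block, so $r_1+r_2=s$ (one has to check there is no ``interaction'' lowering the rank — this follows because a linear combination of a $(1,0)$-syzygy and a $(0,1)$-syzygy that vanishes forces each part to vanish, since they live in complementary bigraded strata of $R^{s+1}$). On the other hand, the Jacobian dual matrices always satisfy $\rank_S(\Psi_\xx\otimes S)\le n$ and $\rank_S(\Psi_\yy\otimes S)\le m$, because — arguing as in the first paragraph of the proof of Theorem~\ref{Pjacobiandual} — the relations $x_i\,(\text{entry}) - x_j\,(\text{entry})$ they encode cut out, after specializing $\xx\mapsto\ff$, the image variety which has dimension $n+m=\dim\PP^n\times\PP^m$, so neither partial map can be ``too degenerate''; more directly, $\Psi_\xx$ evaluated at a general point of $Z$ must annihilate on the left the nonzero vector $[x_0\cdots x_n]$ coming from a general point of $\PP^n$, so its rank is at most $n$, and similarly at most $m$ for $\Psi_\yy$. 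Combining, $r_1\le n$ and $r_2\le m$, and since $r_1+r_2=s=n+m$ (here I use that the image has dimension $n+m$, hence $s\ge n+m$; in fact for a dominant map to $\PP^s$ one needs $s=n+m$ for the dimension count, or one restricts to the image), we are forced to have $r_1=n$ and $r_2=m$.

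Finally, with $\rank_S(\Psi_\xx\otimes S)=n$ and $\rank_S(\Psi_\yy\otimes S)=m$ established, Theorem~\ref{Pjacobiandual} applies verbatim and yields that $\Fc$ is birational onto its image (and simultaneously that the inverse is given by maximal minors of the relevant submatrices). The main obstacle I anticipate is the bookkeeping in the second paragraph: making rigorous that the rank of the combined syzygy submatrix genuinely decomposes as $r_1+r_2$ across the $\xx$- and $\yy$-blocks, and that this matrix really coincides (up to a change of generators and transpose) with the pair $(\Psi_\xx,\Psi_\yy)$ rather than just mapping into it — in particular controlling that passing to a \emph{minimal} generating set of $\Jc_{(1,0,\ast)}$, as required in the construction of $\Psi_\xx$, does not drop the rank below what the raw syzygy matrix provides. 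This is where one leans on the hypothesis that the rank is \emph{maximal} ($=s$): it guarantees that after the expansion there are $n$ (resp.\ $m$) independent rows surviving the specialization modulo $\fc$, which is exactly the input Theorem~\ref{Pjacobiandual} needs. The dimension hypothesis on the image is used precisely to pin down $s=n+m$ and to guarantee the upper bounds $\rank\le n$, $\rank\le m$ are attained rather than exceeded.
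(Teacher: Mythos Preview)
Your overall strategy --- reduce to Theorem~\ref{Pjacobiandual} by showing $\rank_S(\Psi_{\xx}\otimes S)=n$ and $\rank_S(\Psi_{\yy}\otimes S)=m$ --- is exactly the paper's strategy, and your argument for the upper bounds $\rank_S(\Psi_{\xx}\otimes S)\le n$, $\rank_S(\Psi_{\yy}\otimes S)\le m$ is essentially what the paper does (the paper phrases it as a base change to $K=k(\yy)$ and then invokes the projective bound from \cite{DHS}). The gap is in your derivation of the \emph{lower} bound, i.e.\ in the second paragraph.

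Concretely, both intermediate claims there are false. Take the Segre map $\PP^1\times\PP^1\to\PP^3$ with $f_0=x_0y_0,\,f_1=x_0y_1,\,f_2=x_1y_0,\,f_3=x_1y_1$. The $(1,0)$-syzygy block is
\[
M_1=\begin{pmatrix} x_1 & 0\\ 0 & x_1\\ -x_0 & 0\\ 0 & -x_0\end{pmatrix},
\]
which has $\rank_R M_1=2$, while $n=1$; so your assertion ``$r_1\le n$'' fails. Likewise $r_2=2$, so $r_1+r_2=4\ne 3=s$; your splitting ``$r_1+r_2=s$'' also fails. (Your justification --- ``complementary bigraded strata of $R^{s+1}$'' --- only rules out $k$-linear dependencies, not $R$-linear ones.) The point is that the hypothesis is a rank over $R$, whereas Theorem~\ref{Pjacobiandual} needs ranks over $S$, and passing from $M$ to $N$ via $[\zz]M=[\xx,\yy]N$ does not preserve rank in either direction; in the Segre example $\rank_{k[\zz]}N_1=2$ but $\rank_S(N_1\otimes S)=1$ because $\det N_1=z_0z_3-z_1z_2\in\fc$.

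What the paper does to bridge this $R$-versus-$S$ gap is a Krull-dimension comparison of Rees algebras of modules: it shows that the identity on $k[\xx,\yy,\zz]$ induces a surjection $\Rc_R(\coker M)\twoheadrightarrow \Rc_S(\coker(N\otimes S))$, and then compares dimensions (using $\dim R-\dim S=1$ and $\rank_R M=s$) to extract $\rank_S(N\otimes S)\ge n+m$ directly. Since $N$ is block-diagonal in the $\xx$- and $\yy$-rows, this gives $\rank_S(N_1\otimes S)+\rank_S(N_2\otimes S)\ge n+m$, and \emph{now} the upper bounds force equality. The dimension hypothesis on the image enters only through $\dim S$ in this count; it is not used to force $s=n+m$ (indeed $s=3>2=n+m$ in the Segre example). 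So the missing ingredient in your proposal is precisely this Rees-algebra surjection and dimension count, which is the substantive content beyond Theorem~\ref{Pjacobiandual}.
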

\begin{proof}
Note that the image of $\Fc$ is a projective subvariety of $\PP^s$.
Let  $\zz$ be homogeneous coordinates on $\PP^s$ and set $S=k[\zz]/\fc$ for the homogeneous coordinate ring of the image of $\mathcal{F}$. 
Since $I$ is bihomogeneous, it admits a minimal syzygy matrix whose columns are bihomogeneous.
Clearly, the independent syzygies of degrees either $(1,0)$ or $(0,1)$ will be columns of this matrix.
Let $M$ denote the submatrix with these columns.
Then choose a matrix $N$  with entries in $k[\zz]$ such that 
$[\zz]\cdot M=[\xx,\yy]\cdot N$. Let $ E=\coker (N\otimes_{k[\zz]}S)$.

We now introduce in the discussion the Rees algebras  $\Rc_R(\coker \,M)$ and $\Rc_S(E)$. 
Thus, one has
 $$\Rc_R(\coker\, M)=\frac{k[\xx,\yy,\zz]}{( [\zz]M,\tau_1)}~~~
\text{and}~~~~\Rc_S(E)=\frac{k[\xx,\yy,\zz]}{(\fc, [\xx,\yy]N,\tau_2)}$$
where $\tau_1$ is the $R$-torsion of ${\rm Sym}_R(\coker M)$ lifted to $k[\xx,\yy,\zz]$ and, similarly,  $\tau_2$ is the $S$-torsion of  ${\rm Sym}_S(E)$ lifted to $k[\xx,\yy,\zz]$. 

Note that, by definition, the Rees algebra $\Rc_S(E)$ of  $E$ and that of $E$ modulo its torsion coincide. Since $S$ is a domain, the latter module embeds into a free module over $S$. In particular, $\Rc_S(E)$ is a domain, i.e., $(\fc, [\xx,\yy]N,\tau_2)$ is a prime ideal. 

 We claim that $( [\zz]M,\tau_1)\subseteq (\fc, [\xx,\yy]N,\tau_2).$

Indeed, let $G=G(\xx,\yy,\zz)\in \tau_1$.
Then there exists $F(\xx,\yy)\in k[\xx,\yy]\setminus \{0\}$ 
such that $F(\xx,\yy)G\subset ( I_1(\zz \cdot M))\subset ({\mathfrak c}, [\xx,\yy]N,\tau_2)$.
If $G\not\in ({\mathfrak c}, [\xx,\yy]N,\tau_2)$ then $F(\xx,\yy)\in ({\mathfrak c}, [\xx,\yy]N,\tau_2)$.
By the definition of $\tau_2$ there exists $H(\zz)\in k[\zz]\setminus {\mathfrak c}$
such that $H(\zz)F(\xx,\yy)\in ({\mathfrak c}, [\xx,\yy]N)$. Recall that $[\zz]\cdot M=[\xx,\yy]\cdot N$.
Evaluating $\zz \mapsto \ff$  would give $H(\ff)F(\xx,\yy)= 0$ whence $F(\xx,\yy)=0$
since $H(\ff)\neq 0$; this is a contradiction.

As a consequence, one has a surjective $R$-algebra map
 $\Rc_R(\coker\, M)\twoheadrightarrow \Rc_S(E)$ and hence 
 \begin{equation}
 \dim(\Rc_S(E))\leq \dim(\Rc_R(\coker\, M)). 	
 \end{equation}
 Now
 $\dim(\Rc_S(E))=\dim(S)+(n+1+m+1)-\rank(N\otimes S)$ and  $\dim(\Rc_R(\coker\, M))=\dim(R)+s+1-\rank(M)$. Since $\dim(R)=\dim(\PP^n\times \PP^m)+2$ we have $\dim(R)-\dim(S)=1$. Therefore the above inequality
 implies that
$$n+m+(\rank(M)-s)\leq \rank(N\otimes S).$$

Since $\rank(M)=s$ by assumption, we obtain $n+m\leq  \rank(N\otimes S)$. Notice that $N\otimes S$ is a submatrix of the ``concatenated'' Jacobian dual matrix 
 $$\rho:=\left(\begin{array}{c}
 	\Psi_{\xx}\otimes S  \\ \hline 
	 \Psi_{\yy}\otimes S
 \end{array}\right)$$
 in the notation introduced in the previous subsection.
 
 Thus we have
$\rank(\rho)\geq n+m$, whenever  $\rank(M)=s$. 

{\sc Claim:}  $\rank(\Psi_{\xx}\otimes S)\leq n$ (and, similarly,
   $\rank(\Psi_{\yy}\otimes S)\leq m$).  
   
Assuming the claim, it follows that $\rank(\rho)\leq n+m$ and the equality happens if and only if  $\rank(\Psi_{\xx}\otimes S)= n$
 and  $\rank(\Psi_{\yy}\otimes S)= m$.
Therefore, the result follows from Proposition~\ref{Pjacobiandual}.

We now  show that  $\rank(\Psi_{\xx}\otimes S)\leq n$. Indeed, consider the field $K:=k(\yy)$ (the generic point of $\PP^m_k$) and the rational map 
$$\Fc':\PP^n_K\dto \PP^s_K$$
which is defined by the polynomials $f_0,\ldots,f_s$ viewed as polynomials in $K[\xx]$. Let $S':=K[\yy]/(\fc)$ be the coordinate ring of the image of $\Fc'$ and consider the Jacobian dual matrix of $\Fc'$ over $S'$ : $\Psi'\otimes S'$. Then, because of the field inclusion $k\hookrightarrow K$ the column space of $\Psi_{\xx}\otimes S$ is contained in the column space of $\Psi'\otimes S$. Therefore, $\rank(\Psi_{\xx}\otimes S)\leq \rank(\Psi'\otimes S')\leq n$ where the last inequality follows from \cite[Corollary 2.16]{DHS}.
\end{proof}

\begin{rem}\rm
The mutual independence of the hypotheses in Proposition \ref{Plinear rank} has already been observed in \cite[bottom p. 409]{DHS} in the case the source of $\Fc$ is a single projective space; likewise, in our setting. The most obvious situation where the number of linear syzygies of the required type is maximal and yet the image has smaller dimension is obtained as follows. We explain the projective version, the biprojective one being entirely similar.

Let $\mathcal{F}:\PP^r\dasharrow \PP^s$ be a birational map onto the image such that the linear syzygies of the defining forms $\ff$ have maximal rank.
Let $I\subset R$ denote the base ideal of $\mathcal{F}$.
Consider the coordinate projection $\pi:\PP^{r+1}\dasharrow \PP^r$ defined by the first $r+1$ variables -- thus, this corresponds to the ring extension $R=k[x_0,\ldots, x_r]\subset R[x_{r+1}]$.
Since the latter is a faithfully flat extension, or directly, the module of syzygies of $\ff$ on $R[x_{r+1}]$ is extended from the $R$-module of syzygies of $\ff$, in particular the linear parts have the same rank as $R$-module or $R[x_{r+1}]$-module.
At the other end the $k$-algebra $k[\ff]$ is the same whether considered as a subalgebra of $R$ or of $R[x_{r+1}]$.
Therefore, the composite map $\mathcal{F}\circ \pi:\PP^{r+1}\dasharrow \PP^s$ has the same linear rank and the same image as $\mathcal{F}$.
This shows that maximal linear rank does not imply maximal dimension of the image. 
\end{rem}
To get a biprojective analogue, it suffices to take a one-sided projection $\PP^{n+1}\times \PP^m\dasharrow \PP^{n}\times \PP^m$ to the source of a birational map $\PP^{n}\times \PP^m\dasharrow \PP^s$ having maximal linear rank in the sense of the statement of the Proposition (e.g., an arbitrary Segre map).

\smallskip 

It is of course clear that the full converse of the statement in the proposition is false. In the projective case, one can take a birational parameterization $\PP^1\dasharrow \PP^2$ of a plane curve with parameters of degree $\geq 4$ (hence, with linear rank $0$).
For example, take the parameters $x^4, y^4, x^3y+xy^3$ on $k[x,y]$.
Since the image is a quartic curve, the map $\mathcal{F}$ defined by these parameters is automatically birational onto the curve.

To extract a biprojective example, compose the induced map $$({\rm id},\mathcal{F}):\PP^1\times \PP^1\dasharrow \PP^1\times \PP^2$$ with the Segre map $\PP^1\times \PP^2\dasharrow \PP^5$. The result is clearly birational onto a subvariety of dimension $2$ of the Segre embedding.
However, a calculation with \texttt{M2} shows that the linear rank is only $3$.
 
\medskip


%
%
%
%

\section{Syzygies of low degree of bigraded maps in the plane}\label{sec:lowdegdim}

In this section, we will focus on the linear syzygies of bigraded rational maps from $\PP^1\times \PP^1$ to $\PP^2$. 
Under consideration will be the cases where the total degree of the biforms is $2$ or $3$.
Note that in the projective case, plane Cremona maps of these degrees are automatically de Jonqui\`eres maps. In both cases the base ideal is an ideal of $2$-minors of a $3\times 2$ matrix, with two linear syzygies or a linear syzyzy and a quadratic one, respectively \cite{HS}.

In the case of a bigraded rational map defined by polynomials $\ff:=\{f_0,f_1,f_2\}$ of bidegree (1,1) it is very easy to see that, up to linear transformations in the source and target spaces, there are only two maps :
$$\PP^1\times \PP^1 \rightarrow \PP^2 : (x:y)\times(u:v) \mapsto (xu:yu:yv),$$
$$\PP^1\times \PP^1 \rightarrow \PP^2 : (x:y)\times(u:v) \mapsto (xu:yu:xv+yu).$$
The first one is birational and $\ff$ has two minimal syzygies of respective bidegrees $(1,0)$ and $(0,1)$, whereas the second one is not birational and $\ff$ has exactly five linearly independant minimal syzygies. Therefore, birationality is here guaranteed by the existence of a linear syzygy. To understand to which extent such a result can be generalized to higher bidegree, some preliminary work is required.
Our tools will be largely homologically oriented. Before going into the details, we first fix some notation.

\smallskip

We will switch from the previous notation $\mathcal{F}$ for a rational map to the symbol $\phi$.
Let $k$ be an infinite field. Let $R:=k[x,y;u,v]$ be the bigraded polynomial ring with weights defined by $\deg(x)=\deg(y)=(1,0)$ and $\deg(u)=\deg(v)=(0,1)$. Let $f_0,f_1,f_2$ be three bihomogeneous polynomials of bidegree $(a,b)$ and set $I=(f_0,f_1,f_2)\subset R$. Consider the rational map defined by these forms:
\begin{eqnarray*}\label{eq:phi}
	\phi: \PP^1\times \PP^1 & \dasharrow & \PP^2 \\
	((\alpha:\beta), (\gamma:\delta)) & \mapsto & (f_0(\alpha,\beta, \gamma,\delta):f_1(\alpha,\beta, \gamma,\delta):f_2(\alpha,\beta, \gamma,\delta)).
\end{eqnarray*}
We assume throughout that $\phi$ is a dominant rational map and that the polynomials $f_0,f_1,f_2$ do not have a proper common factor in $R$, which, in a more geometric terminology, means that these polynomials define a zero-dimensional scheme in $\PP^1\times \PP^1$; let $\Bc$ denote this scheme -- called the \emph{base scheme} of $\phi$.  
We note  that the degree of $\Bc$, denoted $\deg(\Bc)$ is equal to the bigraded Hilbert function of $R/I$ for sufficiently high bidegree $(\mu,\nu)$.  

In analogy to a well-known degree formula in the projective case, one has the following degree formula in the biprojective counterpart (see, e.g., \cite[Lemma 7.4]{Bot11}):
\begin{equation}\label{degree-formula}
\deg (\phi)=2ab-\sum_{x\in \Bc} e_x(I),
\end{equation}
where $\deg(\phi)$ stands for the field degree of the rational map $\phi$ and $e_x(I)$ stands for the Hilbert-Samuel multiplicity of $I$ on the localization $R_{\pp}$ at the defining prime ideal $\pp$ of the point $x$ (see \cite[\S4.5]{BH93} for more details). An important property of this latter multiplicity is that it is equal to the length of the residue of $R_{\pp}$ modulo the ideal generated by two general $k$-linear combinations of the polynomials $f_0,f_1,f_2$.

In particular, the degree formula \eqref{degree-formula} can be easily derived from this property as follows. Two linear forms in three variables define a point $P$ in the target $\PP^2$ and the corresponding linear combinations of  $f_0,f_1,f_2$ define a subscheme $Y_P$ in $\PP^1\times \PP^1$ giving the inverse image of $P$ by $\phi$ off $\Bc$. On an open subset of $\PP^2$, or equivalently of the space of coefficients of the linear forms, the inverse image is a finite set. Hence, for a general point $P$, $Y_P$ is a complete intersection of degree $2ab$ and is the union of a component $X_P$ not meeting $\Bc$, of degree equal to the degree of the map (notice that $X_P$ is reduced by Bertini theorem) and a component with support $\Bc$ in which each point $x$ has multiplicity equal to $e_x(I)$. Indeed, the multiplicity at a point $x$ is constant and equal to its minimal value for two linear forms corresponding to a dense open subset of $\PP^2$; this value is $e_x(I)$ by \cite[Corollary 4.5.10]{BH93}.

\subsection{Counting linear syzygies} 

We denote by $\Syz(I)\subseteq R^3$ the module of syzygies of $I$. It is a bigraded module and the \emph{linear syzygies} correspond to the graded parts $\Syz(I)_{(1,0)}$ and $\Syz(I)_{(0,1)}$. In other words, in the structural bigraded exact sequence
$$0\longrightarrow Z_1\longrightarrow R^3(-a,-b) \xrightarrow{(f_0,f_1,f_2)} I\longrightarrow 0,$$
we have the identification $\Syz(I)=Z_1(a,b).$ In the sequel, we will use the notation $K_\bullet$, $Z_\bullet$, $B_\bullet$ and $H_\bullet$ to refer to the terms, cycles, boundaries and homology modules of the Koszul complex of the sequence $f_0,f_1,f_2$. We set $\fn:=(x,y)\cap (u,v)=(xu,xv,yu,yv) \subset R$ for the ideal generated by all monomials of bidegree $(1,1)$.
Recall the following bigraded exact sequence in local cohomology
{\small
\begin{equation}\label{eq:les}
	0\longrightarrow H_\fn^0(R/I) \longrightarrow R/I \longrightarrow \bigoplus_{(\mu,\nu)\in \ZZ^2} H^0(\PP^1\times \PP^1, \mathcal{O}_{_{\Bc}}(\mu,\nu))\longrightarrow H_\fn^1(R/I)\longrightarrow 0.
\end{equation}
}

In the following, an upper right star $*$ attached to an $R$-module will denote its Matlis dual.   

\begin{lem} \label{lemma2} Set $\Omega:= \{  (\mu,\nu)\in \ZZ^2  \,| \, -2b< b\mu-a\nu< 2a \}.$ Then
\begin{equation}\label{bigraded-piece)}
H_\fn^1(R/I)_{(\mu,\nu)} \simeq  (H_1)^\ast_{(3a-\mu-2,3b-\nu-2)},
\end{equation} 
for every $(\mu,\nu)\in \Omega$
\end{lem}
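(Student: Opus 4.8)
The plan is to use local duality on the bigraded polynomial ring $R=k[x,y;u,v]$, whose canonical module is $\omega_R=R(-2,-2)$, together with the Koszul complex of the regular-degree presentation of $I$. First I would set up the Koszul complex $K_\bullet$ of the three forms $f_0,f_1,f_2$ of bidegree $(a,b)$, so that $K_i = \bigwedge^i R^3(-a,-b)$, i.e. $K_0=R$, $K_1=R^3(-a,-b)$, $K_2=R^3(-2a,-2b)$, $K_3=R(-3a,-3b)$. Since the $f_i$ have no common factor, $I$ has height (at least) $1$ in $R$, so $H_0=R/I$, the homology $H_3=0$, and $H_1$, $H_2$ are supported on the base scheme $\Bc$, hence are modules of finite length over the irrelevant structure; in particular they are $\fn$-torsion. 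By local duality over $R$ with respect to $\fn$ (more precisely, with respect to the maximal graded ideal, noting that finite-length modules see only $\fn$-torsion), $H^0_\fn(H_i)^\ast \simeq \Ext^{4-0}_R(H_i,\omega_R)=\Ext^4_R(H_i,R(-2,-2))$, and since $H_1,H_2$ have finite length $H^0_\fn(H_i)=H_i$, giving $H_i^\ast\simeq \Ext^4_R(H_i,R)(2,2)$.

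Next I would run the standard hypercohomology spectral sequence of the Koszul complex $K_\bullet$ computing $H^\bullet_\fn(R/I)$ via $H^\bullet_\fn(K_\bullet)$ and the homology $H_\bullet(K_\bullet)$. Because $R$ is Cohen--Macaulay of dimension $4$, $H^j_\fn(K_i)$ vanishes except for $j=4$, where $H^4_\fn(K_i)=K_i^\vee$ is (up to degree shift and Matlis dual) the appropriate twist of $E$, the injective hull. Concretely $H^4_\fn(R)^\ast \simeq R(-2,-2)$, and more generally $H^4_\fn(K_i)^\ast \simeq \bigwedge^i R^3(a,b)\otimes R(-2,-2)$. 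Dualizing the complex $H^4_\fn(K_\bullet)$ one recovers, up to the shift by $(-2,-2)$ and a re-indexing $i\mapsto 3-i$, the original Koszul complex with $f_i$ replaced by their transposes, so its homology in the relevant spot is $(H_1)^\ast$ placed in the correct cohomological degree. Collating the spectral sequence (whose $E_2$ page has at most two nonzero anti-diagonals, so it degenerates quickly with at most one possible differential), one extracts an exact sequence relating $H^1_\fn(R/I)$ to $(H_1)^\ast$ with the degree shift $(\mu,\nu)\mapsto(3a-\mu-2,3b-\nu-2)$ coming from combining the $(3a,3b)$ twist in $K_3$ with the $(-2,-2)$ canonical shift.

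The remaining work is to control the contributions of $H^0_\fn(R/I)$ and of $H^2_\fn(R/I)$, and any surviving spectral-sequence differential, in the specified range $\Omega=\{(\mu,\nu): -2b<b\mu-a\nu<2a\}$. The point is that $H^0_\fn(R/I)$ and $H^2_\fn(R/I)$ are both supported at $\fn$ but, more importantly, vanish in the bidegrees of $\Omega$: $H^0_\fn(R/I)$ is the saturation defect and is zero in high bidegree, while a Castelnuovo--Mumford-type regularity estimate for $R/I$ (coming from the Koszul complex and the bidegrees $(a,b)$, $(2a,2b)$, $(3a,3b)$ of its terms) confines the nonvanishing of $H^0_\fn$ and $H^2_\fn$ to the complementary cone; the inequalities defining $\Omega$ are precisely the ``slab'' in which only $H^1_\fn$ and the dual Koszul homology survive. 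I would verify this by writing out the two-line $E_2$ page explicitly and checking that in bidegrees in $\Omega$ the terms coming from $K_2, K_3$ (after dualizing) contribute nothing to $H^0$ or $H^2$, so the edge map $H^1_\fn(R/I)\to(H_1)^\ast(3a-2,3b-2)$ is an isomorphism there. The main obstacle I anticipate is exactly this bookkeeping: pinning down the correct page of the spectral sequence, tracking the bigraded shifts through Matlis duality without sign/shift errors, and proving the sharp vanishing that makes the isomorphism hold on all of $\Omega$ rather than merely in high bidegree. The sequence \eqref{eq:les} will be the natural tool for handling the $H^0$ and $H^1$ comparison on the geometric side, cross-checking the algebraic computation.
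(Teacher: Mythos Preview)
Your overall plan—compare the two spectral sequences of the double complex $C^p_\fn(K_q)$—is exactly the paper's approach, but there is a genuine error in your execution. You claim that ``because $R$ is Cohen--Macaulay of dimension $4$, $H^j_\fn(K_i)$ vanishes except for $j=4$.'' That would be true for the maximal ideal $\fm=(x,y,u,v)$, but $\fn=(x,y)\cap(u,v)$ is \emph{not} the maximal ideal: by Mayer--Vietoris one has $H^j_\fn(R)=0$ for $j\neq 2,3$, with $H^2_\fn(R)\simeq H^2_{(x,y)}(R)\oplus H^2_{(u,v)}(R)$ and $H^3_\fn(R)\simeq H^4_\fm(R)$; in particular $H^4_\fn(R)=0$. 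So the second spectral sequence has \emph{two} potentially nonzero rows, at $j=2$ and $j=3$, and the whole point of the region $\Omega$ is to kill the $j=2$ row. A direct computation shows $H^2_\fn(R)_{(\mu,\nu)}\neq 0$ only on the two quadrants $\{\mu\le-2,\ \nu\ge0\}$ and $\{\mu\ge0,\ \nu\le-2\}$, and the slab $\Omega$ misses both; crucially, since the Koszul twists $(-ja,-jb)$ satisfy $b(-ja)-a(-jb)=0$, the same vanishing holds for every $H^2_\fn(K_j)_{(\mu,\nu)}$ with $(\mu,\nu)\in\Omega$. Once only the $j=3$ row survives, your self-duality argument goes through (with $H^3_\fn$ in place of $H^4_\fn$), and the shift $(3a-2,3b-2)$ falls out correctly.

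Consequently, your proposed explanation of $\Omega$ via a ``regularity estimate for $R/I$'' is off target: the vanishing that carves out $\Omega$ is a property of $H^2_\fn(R)$ alone, independent of $I$. A smaller point: the claim that $H_1,H_2$ are $\fn$-torsion is not correct, since a point of $\PP^1\times\PP^1$ is a height-$2$ prime of $R$ and modules supported on $\Bc$ have Krull dimension up to $2$. What one actually uses is that $H_2=H_3=0$ (e.g.\ because $\mathrm{depth}(I,R)=2$) and that the sheafifications on $\PP^1\times\PP^1$ of $R/I$ and $H_1$ have zero-dimensional support, so $H^p_\fn(H_q)=0$ for $p\ge 2$; this is why the other spectral sequence in the paper has only the rows $p=0,1$.
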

\begin{proof} The argument hinges on the two spectral sequences associated to the double complex $C_q^p=C_\fn^p(K_q)$, where $K_\bullet$ the Koszul complex of the sequence $f_0,f_1,f_2.$ 
	One of them abuts at step two with:
	\begin{displaymath}
		\begin{array}{cccc}
			H_\fn^0(H_3) & H_\fn^0(H_2) & H_\fn^0(H_1) & H_\fn^0(H_0) \\
			H_\fn^1(H_3) & H_\fn^1(H_2) & H_\fn^1(H_1) & H_\fn^1(H_0)\\
			0 & 0& 0& 0\\
			0 & 0& 0& 0
		\end{array}
	\end{displaymath}
	The other one gives at step one:
	\begin{displaymath}
		\begin{array}{ccccccc}
			0 &\longrightarrow & 0&\longrightarrow & 0& \longrightarrow & 0\\
			0 &\longrightarrow & 0&\longrightarrow & 0& \longrightarrow & 0\\
			H_\fn^2(K_3) & \longrightarrow & H_\fn^2(K_2) & \longrightarrow & H_\fn^2(K_1) &\longrightarrow  & H_\fn^2(K_0) \\
			H_\fn^3(K_3) & \longrightarrow & H_\fn^3(K_2) & \longrightarrow & H_\fn^3(K_1) & \longrightarrow & H_\fn^3(K_0) 
		\end{array}
	\end{displaymath}
	Notice that  for every $(\mu,\nu)\in \Omega,\, H_\fn^2 (R)_{(\mu,\nu)}=0,$ hence $H_\fn^2(K_j)_{(\mu,\nu)} =0$ for all $j=0,\ldots,3.$	Moreover $H_\fn^3(K_q)\simeq K_{3-q}^\ast [2-3a,2-3b]$ for every $q=0,\ldots,3.$ Therefore, this spectral sequence at step two in bidegree $(\mu,\nu)\in \Omega$ gives
	\begin{displaymath}
		\begin{array}{cccc}
			0 & 0& 0& 0\\
			0 & 0& 0&  0\\
			0 & 0& 0&  0\\
			H_0^\ast[2-3a,2-3b] & H_1^\ast[2-3a,2-3b]  &  H_2^\ast[2-3a,2-3b]  & H_3^\ast[2-3a,2-3b] 
		\end{array}
	\end{displaymath}
	By comparing the two spectral sequences, one has $H_2=H_3=0$ and for all $(\mu,\nu)\in \Omega$, we have
	$$  H_\fn^1(R/I)_{(\mu,\nu)} \simeq  (H_1)^\ast_{(3a-\mu-2,3b-\nu-2)}=\Homgr_R((H_1)_{(3a-\mu-2,3b-\nu-2)},k)$$
	as claimed.
\end{proof}

Next, let throughout $I^{\sat}:=I:\fn^{\infty}$ denote the saturation of $I$ with respect to $\fn=(x,y)\cap (u,v)$.

\begin{prop} \label{proposition4} With the above notation, one has
	\begin{enumerate}
		\item[{\rm (i)}] If $0<b<2a$ then $\dim_k \Syz(I)_{(1,0)}=\deg (\Bc) - \dim_k (R/I^{\sat})_{(2a-3,2b-2)}.$
		\item[{\rm (ii)}] If $0<a<2b$ then $\dim_k \Syz(I)_{(0,1)}=\deg (\Bc) - \dim_k (R/I^{\sat})_{(2a-2,2b-3)}.$
	\end{enumerate}
\end{prop}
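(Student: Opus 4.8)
The plan is to compute $\dim_k \Syz(I)_{(1,0)}$ by relating it to the bigraded Hilbert function of $R/I$ and to a suitable piece of local cohomology, which is then identified via Lemma \ref{lemma2}. I will only treat case (i), since case (ii) follows by symmetry upon interchanging the two sets of variables and the roles of $a$ and $b$.

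First I would write down the truncation of the Koszul complex of $f_0, f_1, f_2$. From the presentation $0 \to Z_1 \to R^3(-a,-b) \to I \to 0$, together with the identification $\Syz(I) = Z_1(a,b)$, one has in bidegree $(1,0)$ the exact sequence
\begin{equation*}
0 \longrightarrow \Syz(I)_{(1,0)} \longrightarrow R(1-a,-b)^3 \longrightarrow I_{(1+a,b)} \longrightarrow 0,
\end{equation*}
so that $\dim_k \Syz(I)_{(1,0)} = 3\dim_k R_{(1-a,-b)} - \dim_k I_{(1+a,b)}$. Since $0 < b$, we have $R_{(1-a,-b)} = 0$ unless... actually this needs care: the point is that the contribution from $R^3(-a,-b)$ in bidegree $(1,0)$ is $R_{(1-a,-b)}$, which vanishes because the second coordinate $-b < 0$. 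Hence $\Syz(I)_{(1,0)} \hookrightarrow$ the next Koszul term; more precisely one must go one step further in the Koszul complex. The cleaner route is: $Z_1$ fits in $0 \to B_1 \to Z_1 \to H_1 \to 0$ and $0 \to K_3 \to K_2 \to B_1 \to 0$ where $K_2 = R^3(-2a,-2b)$, $K_3 = R(-3a,-3b)$. Evaluating everything in bidegree $(1+a, b)$ (the shift that turns $Z_1$ into $\Syz(I)$ in bidegree $(1,0)$) and using that $R_{(1-2a, -b)} = R_{(1-3a,-2b)} = 0$ because their second coordinates are negative, one gets $B_1{}_{(1+a,b)} = 0$, hence $\Syz(I)_{(1,0)} \cong (H_1)_{(1+a,b)}$.

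Next I would convert $\dim_k (H_1)_{(1+a,b)}$ into the desired formula. By Lemma \ref{lemma2}, taking $(\mu,\nu)$ with $3a - \mu - 2 = 1+a$ and $3b - \nu - 2 = b$, i.e. $(\mu,\nu) = (2a-3, 2b-2)$, and checking that this $(\mu,\nu)$ lies in $\Omega$ — indeed $b\mu - a\nu = b(2a-3) - a(2b-2) = -3b + 2a = 2a - 3b$, and the condition $-2b < 2a - 3b < 2a$ is equivalent to $b < 2a$ and $0 < b$, both of which hold by hypothesis — one obtains $(H_1)_{(1+a,b)}^\ast \cong H^1_{\fn}(R/I)_{(2a-3,2b-2)}$, so $\dim_k (H_1)_{(1+a,b)} = \dim_k H^1_{\fn}(R/I)_{(2a-3,2b-2)}$. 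Finally, I would feed the bidegree $(2a-3, 2b-2)$ into the four-term local cohomology sequence \eqref{eq:les}. For $(\mu,\nu)$ with $\nu = 2b-2 \geq 0$ and $\mu = 2a-3$... one needs that $H^0(\PP^1\times\PP^1, \mathcal{O}_{\Bc}(\mu,\nu))$ has dimension $\deg(\Bc)$ in this bidegree (true once $(\mu,\nu)$ is large enough coordinate-wise that the twist is globally generated on the zero-dimensional scheme $\Bc$, which holds for $2a-3, 2b-2$ sufficiently positive — this is exactly where one uses that $\deg \Bc$ equals the Hilbert function of $R/I$ in high bidegree as noted before the statement), and that $H^0_{\fn}(R/I)_{(\mu,\nu)} = (I^{\sat}/I)_{(\mu,\nu)}$. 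Then \eqref{eq:les} in bidegree $(2a-3,2b-2)$ reads
\begin{equation*}
0 \to (I^{\sat}/I)_{(2a-3,2b-2)} \to (R/I)_{(2a-3,2b-2)} \to \deg(\Bc) \to H^1_{\fn}(R/I)_{(2a-3,2b-2)} \to 0,
\end{equation*}
which is an abuse of notation for the space of global sections having dimension $\deg(\Bc)$; taking alternating sums of dimensions gives $\dim_k H^1_{\fn}(R/I)_{(2a-3,2b-2)} = \deg(\Bc) - \dim_k (R/I)_{(2a-3,2b-2)} + \dim_k(I^{\sat}/I)_{(2a-3,2b-2)} = \deg(\Bc) - \dim_k (R/I^{\sat})_{(2a-3,2b-2)}$, which is exactly (i).

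The main obstacle I anticipate is the bookkeeping in the two reductions — confirming that the relevant negative-second-coordinate (resp. negative-first-coordinate) pieces of $R$ vanish so that $\Syz(I)_{(1,0)} \cong (H_1)_{(1+a,b)}$, and pinning down precisely why, in bidegree $(2a-3,2b-2)$, the global sections term of \eqref{eq:les} already has the full dimension $\deg(\Bc)$ rather than something smaller. The latter is the crux: it needs that $2a-3$ and $2b-2$ are in the range where the Hilbert function of $R/I$ has stabilized on each graded strand, or equivalently that $\mathcal{O}_{\Bc}(2a-3, 2b-2)$ has vanishing higher cohomology and is generated by global sections; this should follow from the Castelnuovo--Mumford-type regularity of the zero-dimensional scheme $\Bc$ together with the hypotheses $0 < b < 2a$ (and $a \geq 1$), but it deserves a careful argument rather than being asserted.
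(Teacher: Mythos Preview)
Your approach is exactly the one the paper takes: identify $\Syz(I)_{(1,0)}$ with $(H_1)_{(a+1,b)}$ via the vanishing of $(B_1)_{(a+1,b)}$, apply Lemma~\ref{lemma2} (after checking $(2a-3,2b-2)\in\Omega$, which you do correctly), and read off the dimension from the sequence~\eqref{eq:les}.

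The one place you hesitate is not actually an obstacle. Because $\Bc$ is a \emph{zero-dimensional} closed subscheme of $\PP^1\times\PP^1$, the sheaf $\mathcal{O}_{\Bc}(\mu,\nu)$ is supported on finitely many points and any twist is isomorphic to $\mathcal{O}_{\Bc}$ itself; hence $\dim_k H^0(\PP^1\times\PP^1,\mathcal{O}_{\Bc}(\mu,\nu))=\deg(\Bc)$ for \emph{every} $(\mu,\nu)\in\ZZ^2$, with no regularity or positivity hypothesis needed. The remark in the paper that $\deg(\Bc)$ equals the Hilbert function of $R/I$ in high bidegree is a consequence of this fact together with the eventual vanishing of $H^0_{\fn}$ and $H^1_{\fn}$, not something you need to invoke here. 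So your four-term sequence in bidegree $(2a-3,2b-2)$ is valid as written, and the alternating-sum computation goes through without further justification.
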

\begin{proof}
	(i) Observe that since $(B_1)_{(a+1,b)}=0$, Lemma~\ref{lemma2} and \eqref{eq:les} imply that
	\begin{align*}
		\dim_k \Syz(I)_{(1,0)}&=\dim_k (Z_1)_{(a+1,b)}=\dim_k(H_1)_{(a+1,b)}\\
		&=\dim_k H_\fn^1(R/I)_{(2a-3,2b-2)}\\
		&=\deg (\Bc) - \dim_k (R/I^{\sat})_{(2a-3,2b-2)}.
	\end{align*}	
	(ii) is proved similarly.
\end{proof}

\subsection{Birationality of bidegree $(1,1)$ maps} As noticed at the beginning of Section \ref{sec:lowdegdim}, 
the birationnality of bidegree (1,1) maps can be easily characterized by means of linear syzygies. Below, we reprove this fact using Proposition \ref{proposition4}.

\begin{prop}\label{prop:1-1} Let $\phi:\PP^1\times \PP^1 \dto \PP^2$ be a dominant rational map given by bihomogeneous polynomials $f_0,f_1,f_2$ of bidegree $(1,1)$. The following are equivalent:
	\begin{itemize}
		\item[(i)] $\phi$ is  birational,
		\item[(ii)] the polynomials $f_0,f_1,f_2$ have a nonzero bidegree $(1,0)$ syzygy,
		\item[(iii)] the polynomials $f_0,f_1,f_2$ have a nonzero bidegree $(0,1)$ syzygy.
	\end{itemize}
\end{prop}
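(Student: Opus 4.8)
The plan is to establish the implications $(i)\Rightarrow(ii)$, $(ii)\Leftrightarrow(iii)$, and $(ii)\Rightarrow(i)$, using the degree formula \eqref{degree-formula} and Proposition~\ref{proposition4} together with Proposition~\ref{Plinear rank}. For bidegree $(a,b)=(1,1)$ the hypotheses $0<b<2a$ and $0<a<2b$ of Proposition~\ref{proposition4} are both satisfied, so
\[
\dim_k\Syz(I)_{(1,0)}=\deg(\Bc)-\dim_k(R/I^{\sat})_{(-1,0)},\qquad
\dim_k\Syz(I)_{(0,1)}=\deg(\Bc)-\dim_k(R/I^{\sat})_{(0,-1)}.
\]
Since $R$ is generated in bidegrees $(1,0)$ and $(0,1)$, it has no elements in bidegree $(-1,0)$ or $(0,-1)$, so both subtracted terms vanish. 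Hence $\dim_k\Syz(I)_{(1,0)}=\dim_k\Syz(I)_{(0,1)}=\deg(\Bc)$, which already gives $(ii)\Leftrightarrow(iii)$: a nonzero linear syzygy of either type exists precisely when $\deg(\Bc)\geq 1$, i.e. when the base scheme is nonempty.

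For $(ii)\Rightarrow(i)$ I would invoke Proposition~\ref{Plinear rank} with $n=m=1$, $s=2$: it suffices to show that the submatrix of the syzygy matrix consisting of the columns of bidegree $(1,0)$ and $(0,1)$ has rank $s=2$, since $\phi$ is assumed dominant so its image has dimension $n+m=2$. If $\deg(\Bc)\geq 1$ then by the computation above there is at least one syzygy of bidegree $(1,0)$ and at least one of bidegree $(0,1)$; these two syzygies are linearly independent over $k$ because they sit in different bigraded pieces of $\Syz(I)$ (a nontrivial $k$-linear relation between bihomogeneous elements of distinct bidegrees would force each to vanish), so the rank is at least $2$, hence exactly $2$. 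Thus $\phi$ is birational onto its image, and since the image has dimension $2$ inside $\PP^2$ it is all of $\PP^2$.

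It remains to prove $(i)\Rightarrow(ii)$, for which by the equivalence already shown it is enough to prove that $\deg(\Bc)\geq 1$, i.e. that a birational bidegree $(1,1)$ map must have a nonempty base scheme. Here I would use the degree formula \eqref{degree-formula}: with $a=b=1$ and $\deg(\phi)=1$,
\[
1=\deg(\phi)=2ab-\sum_{x\in\Bc}e_x(I)=2-\sum_{x\in\Bc}e_x(I),
\]
so $\sum_{x\in\Bc}e_x(I)=1$, which forces $\Bc\neq\varnothing$ (in fact it is a single reduced point). This settles $(i)\Rightarrow(ii)$ and completes the cycle of implications.

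The main obstacle is the careful bookkeeping in $(ii)\Rightarrow(i)$: one must check that Proposition~\ref{Plinear rank} genuinely applies — that the dominance hypothesis gives an image of the full dimension $n+m=2$, and that "rank $s=2$" of the linear-syzygy submatrix follows from the mere \emph{existence} of one syzygy in each of the two bidegrees rather than requiring a more delicate count. The linear independence across distinct bigraded pieces is what makes this work, and it is worth spelling out explicitly. Everything else reduces to plugging $(a,b)=(1,1)$ into the already-established formulas.
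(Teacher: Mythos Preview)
Your computation via Proposition~\ref{proposition4} that $\dim_k\Syz(I)_{(1,0)}=\dim_k\Syz(I)_{(0,1)}=\deg(\Bc)$ matches the paper's opening step, and your argument for $(i)\Rightarrow(ii)$ via the degree formula is fine. The difficulty is in $(ii)\Rightarrow(i)$.

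You invoke Proposition~\ref{Plinear rank}, which requires the submatrix $M$ of linear syzygies to have \emph{rank $s=2$ as a matrix over $R$}, i.e.\ rank over the fraction field of $R$. What you actually establish is only that a chosen $(1,0)$-syzygy $\sigma_1$ and a chosen $(0,1)$-syzygy $\sigma_2$ are linearly independent over $k$; this does not by itself yield $R$-rank $2$ (e.g.\ the columns $(x,0,0)^t$ and $(y,0,0)^t$ in $R^3$ are $k$-independent but span a rank-$1$ submodule). The gap can be closed: if $\sigma_1,\sigma_2$ were proportional over $\mathrm{Frac}(R)$, clearing denominators would give $m(u,v)\,\sigma_1=\ell(x,y)\,\sigma_2$ for nonzero linear forms $\ell,m$, and comparing bidegrees entrywise forces $\sigma_1=\ell\cdot\mathbf{c}$, $\sigma_2=m\cdot\mathbf{c}$ for some $\mathbf{c}=(c_0,c_1,c_2)\in k^3$; then $\ell\sum c_if_i=0$ contradicts the $k$-linear independence of $f_0,f_1,f_2$ (which holds because $\phi$ is dominant onto $\PP^2$). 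With this addition your route is valid.

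That said, the paper's own proof avoids Proposition~\ref{Plinear rank} altogether and is shorter. The observation you did not exploit is that under the standing hypotheses one always has $\deg(\Bc)\leq 1$: dominance gives $\deg(\phi)\geq 1$, whence $\sum_{x\in\Bc}e_x(I)=2-\deg(\phi)\leq 1$, and $\deg(\Bc)\leq\sum_x e_x(I)$. Thus $\deg(\Bc)\in\{0,1\}$, so ``$\deg(\Bc)\geq 1$'' and ``$\deg(\Bc)=1$'' coincide. Since a single reduced base point is a local complete intersection, $\deg(\Bc)=1$ is equivalent to $\sum_x e_x(I)=1$, i.e.\ to $\deg(\phi)=1$. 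Combining this with $\dim_k\Syz(I)_{(1,0)}=\dim_k\Syz(I)_{(0,1)}=\deg(\Bc)$ gives all three equivalences at once, without any appeal to the Jacobian-dual machinery.
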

\begin{proof} The map $\phi$ is birational if and only if $\deg(\phi)=1$ and by the degree formula \eqref{degree-formula} this is equivalent to having $\deg (\Bc)=\sum_{x\in \Bc} e_x(I)=1$. Now, by Proposition \ref{proposition4} with $a=b=1,$ we have
	\begin{align*}
		\dim_k \Syz(I)_{(1,0)}& =\deg (\Bc) - \dim_k (R/I^{\sat})_{(-1,0)}=\deg (\Bc),\\
		\dim_k \Syz(I)_{(0,1)}&=\deg (\Bc) - \dim_k (R/I^{\sat})_{(0,-1)}=\deg (\Bc).
	\end{align*}
	Therefore, $\phi$ is birational if and only if $\dim_k \Syz(I)_{(1,0)}=1$, or equivalently if and only if $\dim_k \Syz(I)_{(0,1)}=1$.
\end{proof}
 The above birationality criterion can be translated into a numerical effective test. For that purpose, set
$$f_i(x,y;u,v):=c_{i,0}xu+c_{i,1}xv+c_{i,2}yu+c_{i,3}yv.$$
We seek a triple of polynomials $g_0,g_1,g_2$ that are linear forms in $x,y$ (or equivalently $u,v$) and such that $\sum g_if_i\equiv 0$. Such a triple can be found as elements in the kernel of a matrix $M$ whose columns are filled with the coefficients of the polynomials
$$ xf_0, yf_0, xf_1, yf_1, xf_2, yf_2$$
in a basis of bihomogeneous polynomials of bidegree $(2,1)$, typically
$$ x^2u, x^2v, y^2u , y^2v, xyu, xyv.$$
The matrix $M$ is hence the following $6\times 6$-matrix
\begin{equation}\label{eq:M11}
	M=\left[
	\begin{array}{cccccc}
		c_{0,0} & 0          & c_{1,0}   & 0                & c_{2,0} & 0       \\
		c_{0,1} & 0           & c_{1,1}   & 0               & c_{2,1} & 0 \\
		0      & c_{0,2} & 0            & c_{1,2}        & 0         & c_{2,2} \\
		0  & c_{0,3}  & 0            & c_{1,3}        & 0 & c_{2,3}      \\
		c_{0,2}  & c_{0,0} & c_{1,2}  & c_{1,0}         & c_{2,2} & c_{2,0} \\
		c_{0,3}  & c_{0,1} & c_{1,3}    & c_{1,1}       & c_{2,3}          & c_{2,1}
	\end{array}
	\right].	
\end{equation}
As a consequence, in Proposition \ref{prop:1-1}, we could add as a fourth item the statement that $\det(M)=0$.

\subsection{Birationality of bidegree $(1,2)$ maps} 

Before providing our birationality criteria in this case, we establish the following technical lemma.

\begin{lem} \label{lemma34} Let $\phi:\PP^1\times \PP^1 \dto \PP^2$ be a dominant rational map defined by bihomogeneous polynomials $f_0,f_1,f_2\in R$ of bidegree $(1,2)$ without common factor in $R\setminus k$. Set $I=(f_0,f_1,f_2)\subset R$.  
	Then, we have	
	\begin{itemize}
		\item[(i)] $\dim_k \Syz(I)_{(0,1)}=\deg (\Bc)-2,$
		\item[(ii)] $\dim_k \Syz(I)_{(1,1)}=\deg (\Bc).$
	\end{itemize}
\end{lem}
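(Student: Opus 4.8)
The strategy is to apply Proposition~\ref{proposition4} in the special case $(a,b)=(1,2)$, and to compute the two correction terms $\dim_k (R/I^{\sat})_{(2a-2,2b-3)}$ and an analogous quantity governing the bidegree $(1,1)$ syzygies. For part (i), note that $0<a=1<4=2b$, so Proposition~\ref{proposition4}(ii) applies directly and gives
\begin{equation*}
\dim_k \Syz(I)_{(0,1)}=\deg(\Bc)-\dim_k (R/I^{\sat})_{(0,1)}.
\end{equation*}
So the whole of part (i) reduces to showing $\dim_k (R/I^{\sat})_{(0,1)}=2$. Since the base scheme $\Bc$ has no common factor, $I^{\sat}$ is a saturated ideal defining $\Bc\subset\PP^1\times\PP^1$, and $(R/I^{\sat})_{(\mu,\nu)}=H^0(\PP^1\times\PP^1,\mathcal I_{\Bc}(\mu,\nu))^{\vee}$-complement; more concretely, from the exact sequence \eqref{eq:les} in the range where $H^0_{\fn}=H^1_{\fn}=0$ one has $\dim_k (R/I^{\sat})_{(\mu,\nu)}=h^0(\mathcal O_{\Bc}(\mu,\nu))$ once $\mu,\nu\geq 0$, and here I must argue that $(0,1)$ already lies in that range. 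In bidegree $(0,1)$ the ambient space $R_{(0,1)}$ has dimension $2$; since $\phi$ is dominant, its coordinates $f_0,f_1,f_2$ are linearly independent of bidegree $(1,2)$, so $I_{(0,1)}=0$ and hence $(R/I)_{(0,1)}=R_{(0,1)}$ has dimension $2$. The point is to upgrade this to $I^{\sat}$: one checks that $H^1_{\fn}(R/I)_{(0,1)}=0$ (for instance via Lemma~\ref{lemma2}, since $(0,1)$ lies in $\Omega=\{-4<2\mu-\nu<2\}$ and $(H_1)^{\ast}_{(3-0-2,6-1-2)}=(H_1)^{\ast}_{(1,3)}$, and $(H_1)_{(1,3)}=(Z_1)_{(1,3)}$ can be shown to vanish because a syzygy of bidegree $(0,1)$ on forms of bidegree $(1,2)$ would force too much, or more simply because $(B_1)_{(1,3)}=R_{(0,1)}^3$ surjects), giving $(R/I^{\sat})_{(0,1)}\cong (R/I)_{(0,1)}$ of dimension $2$. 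That yields (i).

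\textbf{Part (ii).} Here I would run the same machine one bidegree up. The bidegree $(1,1)$ syzygies of $I$ correspond to $(Z_1)_{(2,3)}$ after the shift $\Syz(I)=Z_1(1,2)$. Using the Koszul homology description, $\dim_k(Z_1)_{(2,3)}=\dim_k(B_1)_{(2,3)}+\dim_k(H_1)_{(2,3)}$, and $(B_1)_{(2,3)}$ is the image of $R_{(1,1)}^3\to R_{(2,3)}$, i.e. of the three Koszul relations; since $f_0,f_1,f_2$ have no common factor, this map is injective, so $\dim_k(B_1)_{(2,3)}=3\dim_k R_{(1,1)}=3\cdot 4=12$ — wait, that is not yet what we want; instead I track $(H_1)_{(2,3)}$ via Lemma~\ref{lemma2}: $(H_1)^{\ast}_{(2,3)}\cong H^1_{\fn}(R/I)_{(3a-2-2,3b-3-2)}=H^1_{\fn}(R/I)_{(-1,1)}$, provided $(-1,1)\in\Omega$, which holds since $-4<2(-1)-1=-3<2$. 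Then by \eqref{eq:les}, $\dim_k H^1_{\fn}(R/I)_{(-1,1)}=h^0(\mathcal O_{\Bc}(-1,1))-\dim_k(R/I^{\sat})_{(-1,1)}$. The cleaner route, mirroring Proposition~\ref{proposition4}'s proof, is: $\dim_k\Syz(I)_{(1,1)}=\dim_k(Z_1)_{(2,3)}$, and since $(B_1)_{(2,3)}$ is the full Koszul boundary, the relevant combinatorial identity is that the Hilbert function of $R/I$ in bidegree $(2,3)$ has stabilized to $\deg(\Bc)$. Indeed $(2,3)=(2a,2b-1)$ is large enough that $H^0_{\fn}(R/I)=H^1_{\fn}(R/I)=0$ there (again by Lemma~\ref{lemma2} combined with a vanishing of the relevant Koszul homology piece $(H_1)_{(3a-2a-2,3b-(2b-1)-2)}=(H_1)_{(-1,b-1)}$, which vanishes as it sits in a negative $x$-degree where $R$ itself is zero when $b-1\geq 0$ — careful, $R_{(-1,*)}=0$, so $(H_1)_{(-1,1)}=0$ automatically). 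Hence $(R/I^{\sat})_{(2,3)}=(R/I)_{(2,3)}$ and $\dim_k(R/I)_{(2,3)}=\deg(\Bc)$ by definition of $\deg(\Bc)$ as the stable value of the bigraded Hilbert function. Combining, $\dim_k\Syz(I)_{(1,1)}=\dim_k(Z_1)_{(2,3)}=\dim_k(R^3)_{(1,1)}-\dim_k I_{(2,3)}=3\dim_k R_{(1,1)}-\dim_k I_{(2,3)}$; and $\dim_k I_{(2,3)}=\dim_k R_{(2,3)}-\dim_k(R/I)_{(2,3)}=\dim_k R_{(2,3)}-\deg(\Bc)$. Since $\dim_k R_{(1,1)}=4$ and $\dim_k R_{(2,3)}=3\cdot 4=12$, we get $\dim_k\Syz(I)_{(1,1)}=12-(12-\deg(\Bc))=\deg(\Bc)$, which is (ii).

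\textbf{Main obstacle.} The delicate point is the vanishing of the local cohomology correction terms in exactly the bidegrees $(0,1)$ and $(2,3)$ (equivalently, showing these bidegrees lie in the ``stable'' range where $(R/I)$, $(R/I^{\sat})$ and $h^0(\mathcal O_{\Bc})$ all agree, or where the relevant Koszul homology $(H_1)$ vanishes). The cleanest justification is via Lemma~\ref{lemma2}: one must verify the target bidegrees $(3a-\mu-2,3b-\nu-2)$ land where $(H_1)$ visibly vanishes — typically because one coordinate of the bidegree becomes negative (so the piece of $R$, hence of $Z_1\supseteq H_1$, is zero), which is what happens for $(2,3)$ since $3a-2-2=-1<0$. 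For the $(0,1)$ case the target is $(1,3)$, with nonnegative coordinates, so there one argues instead that $\Syz(I)_{(-1,1)}=0$ trivially (the module of syzygies has no elements in $x$-degree $-1$), which forces $(H_1)_{(1,3)}$ into a shape controlled by $(B_1)_{(1,3)}\subseteq R_{(0,1)}^3$ and the hypothesis that $\phi$ is dominant with no common factor. Assembling these vanishings carefully — and confirming that $(0,1),(2,3)\in\Omega$ so Lemma~\ref{lemma2} is applicable at all — is the real content; once they are in hand, both (i) and (ii) follow from elementary Hilbert-function bookkeeping as above.
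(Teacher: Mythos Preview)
Your approach to (ii) contains the paper's argument, though buried in a confused presentation. The clean line is: $\Syz(I)_{(1,1)}=(Z_1)_{(2,3)}=(H_1)_{(2,3)}$ because $(B_1)_{(2,3)}$ is the image of $(K_2)_{(2,3)}=R^3_{(0,-1)}=0$ (you momentarily confused $B_1$ with $B_0=I$ when you wrote ``image of $R_{(1,1)}^3\to R_{(2,3)}$'' and claimed injectivity); then Lemma~\ref{lemma2} at $(\mu,\nu)=(-1,1)\in\Omega$ gives $\dim_k(H_1)_{(2,3)}=\dim_k H^1_\fn(R/I)_{(-1,1)}$; finally \eqref{eq:les} at $(-1,1)$, where $R_{(-1,1)}=0$ and hence $(R/I^{\sat})_{(-1,1)}=0$, yields $\dim_k H^1_\fn(R/I)_{(-1,1)}=\deg(\Bc)$. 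Your alternative route via $\dim_k(R/I)_{(2,3)}=\deg(\Bc)$ has a gap: you verify $H^1_\fn(R/I)_{(2,3)}=0$ but never $H^0_\fn(R/I)_{(2,3)}=(I^{\sat}/I)_{(2,3)}=0$, and the latter does not follow from Lemma~\ref{lemma2}.

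Part (i), however, has a genuine error. After correctly reducing via Proposition~\ref{proposition4} to showing $\dim_k(R/I^{\sat})_{(0,1)}=2$, i.e.\ $I^{\sat}_{(0,1)}=0$, you attempt this by arguing $H^1_\fn(R/I)_{(0,1)}=0$. There are two problems. First, the vanishing you actually need is $H^0_\fn(R/I)_{(0,1)}=(I^{\sat}/I)_{(0,1)}=0$, not $H^1$. Second, and more seriously, invoking Lemma~\ref{lemma2} here is circular: it gives $H^1_\fn(R/I)_{(0,1)}\cong(H_1)^*_{(1,3)}$, and since $(B_1)_{(1,3)}=0$ one has $(H_1)_{(1,3)}=(Z_1)_{(1,3)}=\Syz(I)_{(0,1)}$, which is precisely the module under investigation. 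In particular it is \emph{not} zero in general --- the lemma asserts its dimension is $\deg(\Bc)-2$.

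The paper fills this gap with a short direct argument that genuinely uses the bidegree $(1,2)$ and the no-common-factor hypothesis. Assuming $\deg(\Bc)\geq 2$, suppose $I^{\sat}_{(0,1)}\neq 0$; after a coordinate change $u\in I^{\sat}$, so $\Bc\subset V(u)\cong\PP^1$ and $I^{\sat}\subset(q,u)$ for some form $q(x,y)$ of degree $\geq 2$. Writing each $f_i=a_iq+b_iu$ forces $a_i\in R_{(1-\deg q,\,2)}=0$, whence $u\mid f_i$ for all $i$, contradicting the hypothesis that the $f_i$ have no common factor. This elementary step cannot be replaced by the cohomological bookkeeping you outlined; it is the actual content of part~(i).
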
 
\begin{proof} (i) 
	Since $(a,b)=(1,2)$, Proposition \ref{proposition4} shows that
	$$\dim_k \Syz(I)_{(0,1)}=\deg \Bc- \dim_k(R/I^{\sat})_{(0,1)}.$$
	If $\deg(\Bc)=1$, then the base scheme of $\phi$ consists of a single simple point. Therefore $I^{\sat}=(x,u)$ up to a coordinate change, hence $\dim_k(R/I^{\sat})_{(0,1)}=1$ and we deduce that  
	there is no nonzero syzygy of bidegree $(0,1)$, as claimed. 
	
	Now, we assume that $\deg(\Bc)\geq 2$. Since $\dim_k(R_{(0,1)})=2$, it suffices to show that $I^{\sat}_{(0,1)}=0$. 
	Thus, suppose that $I^{\sat}_{(0,1)}\neq 0$; without loss of generality we may assume that $u\in I^{\sat}$. Now, since $\deg (\Bc)\geq 2,$ there exists a form $q(x,y)$ of bidegree $(2,0)$ such that $I^{\sat} \subset (q,u).$ But 
 since $ f_i \in I^{\sat}$, we have 
	$$f_i=a_i q+b_iu, \ \  i=0,1,2.$$
	As $\deg(f_i)=(1,2)$, we deduce that $a_0=a_1=a_2=0$ and that $u$ divides $f_i$ for all $i=0,1,2$; this is a contradiction.
	
	\smallskip
	
	(ii) By inspecting the shifts of bidegrees in the Koszul complex of the sequence $f_0,f_1,f_2$, and taking into account that the $f_i$'s are of bidegree $(1,2)$, we observe that	
	$$\dim_k \Syz(I)_{(1,1)}=\dim_k(Z_1)_{(2,3)}=\dim_k(H_1)_{(2,3)}.$$
Applying Lemma \ref{lemma2} (we have $(-1,1)\in \Omega$), we get the equality 
$$\dim_k(H_1)_{(2,3)}=\dim_k H_{\fn}^1(R/I)_{(-1,1)}.$$
Now, the exact sequence \eqref{eq:les} restricted to bidegree $(-1,1)$ yields the equality
 $$\dim_k H_{\fn}^1(R/I)_{(-1,1)}=\deg (\Bc)- \dim_k(R/I^{\sat})_{(-1,1)}=\deg(\Bc)$$
and the claimed equality is proved. 
\end{proof}

\begin{thm} \label{Theorem8} Let $\phi:\PP^1\times \PP^1 \dto \PP^2$ be a dominant rational map given by bihomogeneous polynomials $f_0,f_1,f_2\in R$ of bidegree $(1,2)$ without common factor in $R\setminus k$. Setting $I=(f_0,f_1,f_2)\subset R$, the following are equivalent:
	\begin{itemize}
		\item[(i)] $\phi$ is birational,
		\item[(ii)]  $\deg(\Bc)=3,$ and hence $I$ is generically a complete intersection,
		\item[(iii)] $\dim_k \Syz(I)_{(0,1)}=1$,
		\item[(iv)] $\dim_k \Syz(I)_{(1,1)}=3$.
	\end{itemize}
\end{thm}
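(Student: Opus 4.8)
The plan is to prove the chain of equivalences by establishing (ii)$\Leftrightarrow$(iii), (iii)$\Leftrightarrow$(iv), and (i)$\Leftrightarrow$(ii), relying heavily on Lemma~\ref{lemma34} and the degree formula \eqref{degree-formula}. First I would dispose of the equivalence (iii)$\Leftrightarrow$(iv): by Lemma~\ref{lemma34}(i) we have $\dim_k\Syz(I)_{(0,1)}=\deg(\Bc)-2$, and by Lemma~\ref{lemma34}(ii) we have $\dim_k\Syz(I)_{(1,1)}=\deg(\Bc)$, so $\dim_k\Syz(I)_{(0,1)}=1$ if and only if $\deg(\Bc)=3$ if and only if $\dim_k\Syz(I)_{(1,1)}=3$. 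This simultaneously gives (ii)$\Leftrightarrow$(iii)$\Leftrightarrow$(iv), modulo the assertion that $\deg(\Bc)=3$ forces $I$ to be generically a complete intersection, which follows because $2ab=4$ and, since $\deg(\phi)\ge 1$, the degree formula forces $\sum_{x\in\Bc}e_x(I)\le 3=\deg(\Bc)$; as each $e_x(I)\ge 1$ with equality iff $I$ is a complete intersection at $x$, we get $e_x(I)=1$ at every point, i.e. $I$ is generically a complete intersection (and $\deg(\phi)=1$).

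The substantive step is (i)$\Leftrightarrow$(ii), i.e. that birationality is equivalent to $\deg(\Bc)=3$. The direction (i)$\Rightarrow$(ii) is the hard part: from $\deg(\phi)=1$ the degree formula gives $\sum_{x\in\Bc}e_x(I)=2ab-1=3$, and one must upgrade this to $\deg(\Bc)=3$, which by the inequality $\deg(\Bc)\ge\sum_{x}e_x(I)$ (with equality iff $I$ is generically a complete intersection) amounts to showing that a bidegree $(1,2)$ birational map cannot have an embedded or fat base point — equivalently $\Bc$ is reduced of length $3$. I would argue this by exploiting the geometry of the $x$-degree: the forms $f_0,f_1,f_2$ are linear in $(x{:}y)$, so writing $f_i=x p_i(u,v)-y q_i(u,v)$ with $p_i,q_i$ binary quadrics, the map factors through a map to a rational normal curve / scroll, and birationality to $\PP^2$ constrains the base locus sharply. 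Alternatively, and more robustly, I would invoke Proposition~\ref{Plinear rank}: a nonzero syzygy of bidegree $(0,1)$ together with the maximal-rank condition on linear syzygies would yield birationality, but here the cleaner route is to observe that by Lemma~\ref{lemma34} the existence of \emph{any} nonzero $(0,1)$-syzygy already forces $\deg(\Bc)\ge 3$, and then pin down that $\deg(\Bc)\le 3$ when the map is dominant with no common factor via the degree formula, $\deg(\phi)\ge 1$ again giving $\sum e_x(I)\le 3$, hence $\deg(\Bc)= 3$ once we know the scheme is reduced.

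For (ii)$\Rightarrow$(i), once $\deg(\Bc)=3$ and $I$ is generically a complete intersection, the degree formula gives $\deg(\phi)=2ab-\sum_{x\in\Bc}e_x(I)=4-3=1$, so $\phi$ is birational. Thus the only genuine obstacle is controlling the structure of $\Bc$ in the birational case — showing $\Bc$ is reduced of degree exactly $3$ rather than, say, a single point of multiplicity $3$ or a length-$2$ scheme with $\sum e_x(I)=3$ but $\deg(\Bc)=2$; I expect this to require either a Bertini-type genericity argument on the fibers (as sketched in the discussion following \eqref{degree-formula}) or a direct analysis of the possible $I^{\sat}$'s of small colength, using that $\dim_k(R/I^{\sat})_{(0,1)}$ and $\dim_k(R/I^{\sat})_{(-1,1)}$ must both vanish, which is precisely the content extracted in the proof of Lemma~\ref{lemma34}. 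I would therefore structure the final write-up so that Lemma~\ref{lemma34} carries all the homological weight, leaving Theorem~\ref{Theorem8} to be a short deduction combining that lemma with the degree formula.
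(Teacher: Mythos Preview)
Your reduction of (ii)$\Leftrightarrow$(iii)$\Leftrightarrow$(iv) to Lemma~\ref{lemma34} is correct and matches the paper exactly, and your argument for (ii)$\Rightarrow$(i) via the degree formula is also fine. The problem is entirely in (i)$\Rightarrow$(ii), where you have both a sign error and a missing idea.

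First, the inequality you invoke is reversed: one always has $\deg(\Bc)=\sum_{x\in\Bc}d_x\le\sum_{x\in\Bc}e_x(I)$, with equality if and only if $I$ is generically a complete intersection (this is the standard comparison between colength and Hilbert--Samuel multiplicity). So from $\deg(\phi)=1$ you get $\sum_x e_x(I)=3$ and hence $\deg(\Bc)\le 3$; the task is to rule out $\deg(\Bc)\le 2$, not to prove that $\Bc$ is reduced. Your statement ``$e_x(I)\ge 1$ with equality iff $I$ is a complete intersection at $x$'' is also off: $e_x(I)=1$ characterizes a \emph{simple} point, whereas a local complete intersection can have any multiplicity.

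Second, and more importantly, the paper's proof of (i)$\Rightarrow$(ii) is a two-line argument that you have not found: if $\deg(\Bc)\le 2$, then at every base point $x$ the local colength $d_x$ is at most $2$, and in a two-dimensional regular local ring any $\mathfrak m$-primary ideal of colength $\le 2$ is automatically a complete intersection (colength $1$ gives the maximal ideal; colength $2$ forces a generator outside $\mathfrak m^2$, and the quotient is a DVR). Hence $I$ is generically a complete intersection, so $\sum_x e_x(I)=\deg(\Bc)\le 2$, giving $\deg(\phi)\ge 2$. This is the entire content of the implication; no Bertini argument, no analysis of $I^{\sat}$, and no appeal to Proposition~\ref{Plinear rank} is needed. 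Once you correct the direction of the inequality, this elementary local observation closes the gap immediately.
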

\begin{proof} Since (ii) is equivalent to both (iii) and  (iv) by Lemma~\ref{lemma34}, it suffices to show that (i) and (ii) are equivalent.

Now, by the degree formula \eqref{degree-formula}, we have
	\begin{equation}\label{eq:degformtmp}
	\sum_{x\in \Bc}e_x(I)=4-\deg(\phi)\leq 3.	
	\end{equation}
Moreover, by property of the Hilbert-Samuel multiplicity we also have (see, e.g., \cite[\S4.5]{BH93}) $\deg(\Bc)\leq \sum_{x\in \Bc}e_x(I)$	with equality if and only if $I$ is generically a complete intersection. Therefore, if $\deg(\Bc)=3$ then $\sum_{x\in \Bc}e_x(I)=3$, so that $I$ is generically a complete intersection, and from \eqref{eq:degformtmp} we deduce that $\deg(\phi)=1$, i.e.~$\phi$ is birational. Thus, we have just proved that (ii) implies (i). To prove the converse, suppose that $\deg(\Bc)\neq 3$. Then, necessarily, $\deg(\Bc)\leq 2$ and this implies that $I$ is generically a complete intersection. Therefore $\deg(\Bc)=\sum_{x\in \Bc}e_x(I)\leq 2$ and hence $\phi$ cannot be birational by \eqref{eq:degformtmp}. It follows that (i) is equivalent to (ii). 
\end{proof}

\begin{rem}\label{rem:bideg11}\rm  
Item (iii) provides us with a minimal syzygy of bidegree $(0,1)$ so that $u(\sum_{i=0}^2a_if_i)=v(\sum_{i=0}^2b_if_i)$ for some $a_i$'s and $b_i$'s in $k$. It follows that
there exist three polynomials $p,q,r$ of bidegree $(1,1)$ such that $I=(pu,pv,qu+rv)$.
Therefore, $I$ is a perfect ideal generated by the $2$-minors of the matrix
$$M:=\begin{pmatrix} v & q\\   -u&  r\\ 0  &  -p \end{pmatrix}.$$
Thus, one could add yet another equivalent condition to Theorem~\ref{Theorem8}, namely that the ideal $I$ has a free $R$-resolution of the form
	\begin{equation*}
		\xymatrix{ 0 \ar[r] & R(-1,-3)\oplus R(-2,-3)\ar[r] ^{\qquad \; M}& R(-1,-2)^3  \ar[r] & R\ar[r]& R/I \ar[r]& 0.}
	\end{equation*}
	Note that, in this format, three independent 
$(1,1)-$syzygies of $I$ are 
$$(xv,-xu,0), (yv,-yu,0),(q,r,-p),$$ 
the first two being non-minimal.	Hence, in contrast to the spirit of Proposition~\ref{prop:1-1}, in item (iv) of the above theorem $\Syz(I)_{(1,1)}$ 
is not spanned by $3$ minimal syzygies of bidegree $(1,1)$.
\end{rem}
\begin{cor}\label{cor:12} If $\phi$ is  birational, then $\dim_k \Syz(I)_{(1,0)} =0.$
\end{cor}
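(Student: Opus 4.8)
The plan is to show that a birational bidegree $(1,2)$ map has no nonzero linear syzygy in bidegree $(1,0)$ by combining Proposition~\ref{proposition4}(i) with the structural information about the base scheme obtained in Theorem~\ref{Theorem8}. First I would invoke Theorem~\ref{Theorem8}: since $\phi$ is birational, we know that $\deg(\Bc)=3$ and, more precisely from Remark~\ref{rem:bideg11}, that $I=(pu,pv,qu+rv)$ for suitable biforms $p,q,r$ of bidegree $(1,1)$, with $I$ perfect and resolved by the displayed Hilbert--Burch matrix. In particular $I^{\sat}=I$ (a perfect ideal of the expected codimension with a Cohen--Macaulay quotient is already saturated with respect to $\fn$, or one checks directly that the quotient has no $\fn$-torsion from the resolution).

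Next I would apply Proposition~\ref{proposition4}(i), whose hypothesis $0<b<2a$ reads $0<2<2$, which is \emph{false} for $(a,b)=(1,2)$; so a direct appeal to that proposition is not available, and instead I would argue along the same homological lines. Concretely, $\Syz(I)_{(1,0)}=(Z_1)_{(a+1,b)}=(Z_1)_{(2,2)}$. Since the $f_i$ have bidegree $(1,2)$, the Koszul boundary $(B_1)_{(2,2)}$ involves products $f_if_j$ of bidegree $(2,4)$ and hence vanishes, so $(Z_1)_{(2,2)}=(H_1)_{(2,2)}$. The cleanest route is then simply to read off $(H_1)_{(2,2)}$ — equivalently the module of bidegree $(1,0)$ syzygies — from the free resolution in Remark~\ref{rem:bideg11}: the first syzygy module of $I$ is generated in bidegrees $(1,3)$ and $(2,3)$, so it has no element of bidegree $(1,0)$ relative to the generating degree $(1,2)$, i.e. no element of $\Syz(I)$ of bidegree $(1,0)$. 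This gives $\dim_k\Syz(I)_{(1,0)}=0$ directly.

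Alternatively, and perhaps more in the spirit of the preceding results, one can run the Lemma~\ref{lemma2}/\eqref{eq:les} machinery: $(H_1)_{(2,2)}\cong H^1_\fn(R/I)^*_{(3a-2-2,\,3b-2-2)}=H^1_\fn(R/I)^*_{(-1,2)}$ provided $(-1,2)\in\Omega=\{(\mu,\nu):-2b<b\mu-a\nu<2a\}=\{(\mu,\nu):-4<2\mu-\nu<2\}$; for $(\mu,\nu)=(-1,2)$ one gets $2(-1)-2=-4$, which lies on the boundary rather than strictly inside $\Omega$, so Lemma~\ref{lemma2} does not quite apply and this is exactly the delicate point. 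For this reason I expect the resolution-based argument of the previous paragraph to be the clean one, and the main (minor) obstacle to be precisely this: the relevant bidegree sits at the boundary of the range $\Omega$ where the spectral-sequence comparison is valid, so one should not try to push Proposition~\ref{proposition4} or Lemma~\ref{lemma2} verbatim, but rather use the explicit Hilbert--Burch resolution furnished by birationality (Remark~\ref{rem:bideg11}) to conclude that $\Syz(I)$ has no generator, hence no element, of bidegree $(1,0)$.
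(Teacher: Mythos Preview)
Your proposal is correct and lands on essentially the same argument as the paper: read off the vanishing of $\Syz(I)_{(1,0)}$ from the Hilbert--Burch resolution in Remark~\ref{rem:bideg11}, whose first syzygy module is generated in bidegrees $(1,3)$ and $(2,3)$ and hence contains nothing in bidegree $(2,2)$. The paper's one-line proof phrases this as ``an immediate consequence of Remark~\ref{rem:bideg11}, using the fact that $p$ and $qu+rv$ have no proper common factor as $I$ has codimension $2$'' --- the coprimality remark is there precisely to justify that the displayed $3\times 2$ matrix really gives a free resolution (Hilbert--Burch), which you take for granted by citing the Remark; your diagnosis that Proposition~\ref{proposition4}(i) and Lemma~\ref{lemma2} fail at the boundary $(a,b)=(1,2)$ is accurate and explains why the resolution route is the right one. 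One small aside: your claim that $I^{\sat}=I$ is not needed for the argument you actually run, and it is a bit more delicate than your parenthetical suggests (perfection controls depth with respect to the maximal ideal, not with respect to $\fn=(x,y)\cap(u,v)$), so you may simply drop it.
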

\begin{proof}
	This is an immediate consequence of Remark~\ref{rem:bideg11}, 
	using the fact that $p$ and  $qu+rv$ have no proper common factor as $I$ has codimension $2$.
\end{proof}

\begin{rem}\rm Theorem \ref{Theorem8} and Corollary \ref{cor:12} provide another illustration that the converse of Proposition \ref{Plinear rank} does not hold, here for some dominant rational maps from $\PP^1\times \PP^1$ to $\PP^2$. 
\end{rem}

\subsection{Birational maps of bidegree $(2,2)$} \label{sec:22case}

Unlike the cases of rational maps of bidegree (1,1) or (1,2), the linear syzygies associated to a given parameterization are not enough to give birational criterion in higher bidegrees. Yet, in the case of bidegree $(2,2)$, we are able to describe a complete listing of such birational maps. 

\medskip

Let $\phi:\PP^1\times \PP^1 \dto \PP^2$ be a dominant rational map given by bihomogeneous polynomials $f_0,f_1,f_2\in R$ of bidegree $(2,2)$.  We set  $I=(f_0,f_1,f_2)$ and we denote by $\Bc$ the base scheme of $\phi$ which is assumed to be zero-dimensional (i.e.~supported on a finite set of points). The degree formula yields the equality 
\begin{equation}\label{eq:degform22}
	\deg(\phi)=8-\sum_{x\in \Bc} e_x(I).
\end{equation}
And since $\deg(\phi)\geq 1$, we deduce that
\begin{equation*}
1\leq \deg \Bc\leq \sum_{x\in \Bc} e_x(I)\leq 7.	
\end{equation*}

For a codimension $2$ bihomogeneous prime ideal $\pp\supset I$, we will set $d_\pp:=\dim_k(R_{\pp}/I_{\pp})$  (``point degree'')  and let as before $e_\pp$ denote the Hilbert-Samuel multiplicity of $I$ on $\pp$. 
As is well-known, $e_\pp\geq d_\pp$, with equality if and only if $I_{\pp}$ is a complete intersection (a fact we have already used in the proof of Theorem \ref{Theorem8}).

By abuse, one may think of $\pp$ as belonging to $\Bc$;
as such it is the defining prime ideal of a point $p\in \PP^1\times \PP^1$.
Fix one such $\pp$. By changing coordinates, there is no loss of generality in assuming $p= (0,1)\times (0,1)$, i.e., $\pp=(x,u)$.

First, we remark the following :
\begin{lem} \label{lemmadeg6}
Assume the above notation. If $\phi$ is birational then $\deg \Bc \leq 6$. Moreover, if $\deg \Bc =6$ then $I$ is perfect with a minimal resolution of the form :
\begin{displaymath}
		\xymatrix{ 0 \ar[r] & R(-3,-3)^2\ar[r] & R(-2,-2)^3  \ar[r] & I\ar[r]& 0,}
\end{displaymath}
\end{lem}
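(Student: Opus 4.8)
The plan is to exploit the degree formula \eqref{eq:degform22} together with the interplay between $\deg\Bc$ and $\sum_{x\in\Bc}e_x(I)$. If $\phi$ is birational then $\deg(\phi)=1$, so \eqref{eq:degform22} gives $\sum_{x\in\Bc}e_x(I)=7$, and since $\deg\Bc\leq \sum_{x\in\Bc}e_x(I)$ this only yields $\deg\Bc\leq 7$ a priori. To rule out $\deg\Bc=7$, I would argue that $\deg\Bc=7$ forces $\sum e_x(I)=7=\deg\Bc$, hence $I$ is generically a complete intersection, i.e.\ $e_\pp=d_\pp$ at every associated prime $\pp$ of $R/I$. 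The key point is then a Hilbert-function estimate: a generically complete intersection ideal $I$ of bidegree $(2,2)$ with $\deg\Bc=7$ would have $\dim_k(R/I)_{(\mu,\nu)}=7$ for $(\mu,\nu)\gg 0$, but the presence of so much base locus constrains the syzygies so strongly that $\phi$ cannot be dominant onto $\PP^2$ — equivalently, using the degree formula in reverse, $2ab=8$ leaves only $1$ for $\deg(\phi)$, and I must show that this residual degree-$1$ component $X_P$ (reduced, not meeting $\Bc$, from the Bertini argument following \eqref{degree-formula}) cannot coexist with a complete-intersection base locus of colength $7$ inside a complete intersection $Y_P$ of degree $8$ in $\PP^1\times\PP^1$. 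Concretely $Y_P$ is the intersection of two general bidegree-$(2,2)$ curves; its subscheme $\Bc$ of colength $7$ in $Y_P$ has colength $1$, so $\Bc$ is a single point scheme of multiplicity $7$ on a $(2,2)$-complete intersection of multiplicity $8$, and one checks this is geometrically impossible (a $(2,2)$-complete intersection in $\PP^1\times\PP^1$ is smoothable/has bounded local multiplicities incompatible with a length-$7$ fat point and a residual reduced point). I expect this exclusion of $\deg\Bc=7$ — i.e.\ the careful local analysis at $\pp=(x,u)$ bounding $e_\pp$ and $d_\pp$ for a single bidegree-$(2,2)$ curve through $p$ — to be the main obstacle.

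For the ``moreover'' part, assume $\phi$ birational with $\deg\Bc=6$. From \eqref{eq:degform22} with $\deg(\phi)=1$ we get $\sum_{x\in\Bc}e_x(I)=7$, while $\deg\Bc=\sum_{x\in\Bc}d_\pp=6$; thus $\sum_\pp(e_\pp-d_\pp)=1$, so exactly one associated prime $\pp_0$ has $e_{\pp_0}-d_{\pp_0}=1$ and all others have $e_\pp=d_\pp$ (are locally complete intersections). In particular $I$ is \emph{not} everywhere a local complete intersection, but it is perfect of codimension $2$: indeed $R$ is Cohen--Macaulay and $R/I$ has dimension $2=\dim R-2$, so by the Auslander--Buchsbaum formula a minimal free resolution of $R/I$ has length $2$, giving
\begin{displaymath}
0\longrightarrow F_2\longrightarrow F_1\longrightarrow R\longrightarrow R/I\longrightarrow 0,
\end{displaymath}
and $I$ is perfect. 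Since $I$ is generated by three forms of bidegree $(2,2)$ and has codimension $2$, the Hilbert--Burch theorem applies: $F_1=R(-2,-2)^3$ and $F_2=R(-c_1,-d_1)\oplus R(-c_2,-d_2)$, with $I$ generated by the $2\times 2$ minors of the $3\times 2$ Hilbert--Burch matrix $N$ whose columns carry the syzygies of $(f_0,f_1,f_2)$.

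It remains to pin down the shifts $(c_i,d_i)$. The plan is to compute the bigraded Hilbert polynomial of $R/I$ from the resolution and match it against $\deg\Bc=6$. Writing the alternating sum of the $\ZZ^2$-graded Hilbert series, the rank condition forces $\sum_i c_i = \sum_i d_i$ in each grading to cancel the degree-one-in-each-variable growth, and the constant term (which equals $\deg\Bc$) is a quadratic expression in the $c_i,d_i$; combined with the fact that each column of $N$ has bidegree $(c_i-2,d_i-2)$ with nonnegative entries and that $I$ has no linear syzygy of bidegree $(1,0)$ or $(0,1)$ in the relevant range (else Proposition~\ref{proposition4}-type arguments would force $\deg\Bc$ down, or $\phi$ non-dominant), I expect the only solution to be $c_1=c_2=3$, $d_1=d_2=3$, i.e.\ $F_2=R(-3,-3)^2$ as claimed. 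The main subtlety here is ensuring no other bidegree distribution (e.g.\ $R(-2,-4)\oplus R(-4,-2)$, or mixed shifts) yields the same $\deg\Bc=6$ while still being compatible with $\phi$ dominant and the single non-l.c.i.\ point; this is a finite case-check on the Hilbert--Burch shifts, which I would carry out by evaluating $\dim_k(R/I)_{(\mu,\nu)}$ at small bidegrees and comparing with the constraints that $f_0,f_1,f_2$ are three \emph{independent} bidegree-$(2,2)$ forms with no common factor.
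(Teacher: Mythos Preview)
Your proposal has two genuine gaps, and the paper's route is quite different from yours.

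\textbf{Ruling out $\deg\Bc=7$.} You correctly deduce that $\deg\Bc=7$ would force $e_\pp=d_\pp$ at every base point, but the subsequent argument breaks down. The sentence ``its subscheme $\Bc$ of colength $7$ in $Y_P$ has colength $1$, so $\Bc$ is a single point scheme of multiplicity $7$'' is simply false: nothing prevents $\Bc$ from consisting of, say, seven distinct reduced points, or any other degree-$7$ configuration, all of them locally complete intersections. So the impossibility you invoke (``bounded local multiplicities incompatible with a length-$7$ fat point'') is arguing against a situation that need not occur. You would need an entirely different mechanism to exclude $\deg\Bc=7$.

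\textbf{Perfection of $I$.} Your appeal to Auslander--Buchsbaum is circular: from $\dim(R/I)=2$ you cannot conclude $\mathrm{pd}(R/I)=2$; that requires $\mathrm{depth}(R/I)=2$, which is precisely the Cohen--Macaulayness (equivalently, perfection) you are trying to establish. Without perfection you cannot invoke Hilbert--Burch, and the shift analysis never gets off the ground.

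\textbf{What the paper does instead.} The paper avoids both difficulties with a single linkage argument. After a coordinate change placing the residual reduced point of a general $(f_1,f_2)$ at $(1{:}0)\times(1{:}0)$ and a chosen base point at $(0{:}1)\times(0{:}1)$, one writes $f_1=Axu+Byv$, $f_2=Cxu+Dyv$ with $A,B,C,D$ of bidegree $(1,1)$, and computes directly that
\[
(f_1,f_2):(xu,yv)=(f_1,f_2,AD-BC).
\]
Since $f_0$ lies in the saturation-type ideal $(J:(x,u))\cap(x,y)^2\cap(u,v)^2$, whose bidegree-$(2,2)$ part coincides with that of $(f_1,f_2):(xu,yv)$, and $f_0$ is independent of $f_1,f_2$, one gets $I=(f_1,f_2):(xu,yv)$. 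This is a link of the complete intersection $(xu,yv)$ inside the complete intersection $(f_1,f_2)$, hence $I$ is unmixed of degree $8-2=6$ (immediately excluding $\deg\Bc=7$) and perfect by liaison. The resolution drops out explicitly: the two syzygies $(D,-B,-xu)$ and $(C,-A,yv)$, both of bidegree $(1,1)$, form the Hilbert--Burch matrix, giving $F_2=R(-3,-3)^2$ with no case-check on shifts required.

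Your shift-matching idea (via the bigraded Hilbert series) could in principle recover the resolution once perfection is known, but the linkage argument gives perfection, the degree bound, and the explicit syzygies all at once.
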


\begin{proof} Two general $k$-linear combinations of the $f_i$'s define a scheme on the support of $\Bc$ plus an additional simple point $q$ that does not share any coordinate with the base points (the argument is similar to the one given in the last paragraph of the introduction of Section \ref{sec:lowdegdim}). If $\deg \Bc=7$ then choose a point $p$ in the support of $\Bc$, and if $\deg \Bc =6$ then take $p$ as the point for which the Hilbert-Samuel multiplicity is not equal to its degree. After a linear change of coordinates on the source and target spaces, we may and will assume that $q=(1,0)\times (1,0)$,  $p= (0,1)\times (0,1)$ and $(f_1,f_2)=(y,v)\cap J\cap K\cap L$ where  $J\subset (x,u)$ is unmixed with associated primes corresponding to the support of $\Bc$, while $K$ (respectively $L$) is $(x,y)$-primary (respectively $(u,v)$-primary) of degree 4 and generically a complete intersection (i.e.~the image of $K$ (respectively $L$) in $k(u,v)[x,y]$ (respectively $k(x,y)[u,v]$) is a complete intersection). 

Now, we observe that the defining ideal of $\Bc$ is either $J$ if $\deg \Bc =7$, or either $J:(x,u)$ if $\deg \Bc =6$. The latter is a consequence of liaison (see for instance \cite[\S 21.10]{Eis95}). 
 Furthermore, we have that $K:(x,y)=(x,y)^2$ and $L:(u,v)=(u,v)^2$. Therefore
 $$
 (f_1,f_2):(xu,yv)=(J:(x,u))\cap (x,y)^2\cap (u,v)^2
 $$ 
 and in particular $((f_1,f_2):(xu,yv))_{(2,2)}=(J:(x,u))_{(2,2)}$.
 By rewriting $f_1=Axu+Byv$ and $f_2=Cxu+Dyv$, we get 
 $$
 (f_1,f_2):(xu,yv)=(f_1,f_2,AD-BC).
 $$ 
 
 Now, $f_3\in I\subseteq (J:(x,u))\cap (x,y)^2\cap (u,v)^2$.  As the $f_i$'s are linearly independent, $f_3$ is a nonzero multiple of $AD-BC$ modulo $f_1$ and $f_2$, hence we should have $I=(f_1,f_2):(xu,yv)$ which is unmixed of degree 6. This rules out the possibility of having $\deg \Bc =7$ and concludes the proof.
\end{proof}
We now discuss how the strict inequality $e_\pp> d_\pp$ reflects in the form of the generators of $I$.
For this, we resort to explicit computations on the affine piece $y=v=1$.
Now, one has
$I\subset (x,u)_{(2,2)}$ and the latter is spanned by the monomials $$xyv^2, y^2uv, x^2v^2, xyuv, y^2u^2, x^2uv, xyu^2, x^2u^2.$$
Therefore, for $j=0,1,2$, $g_j:=f_j(x,1,u,1)$ is a $k$-linear combination of the monomials $\{x,u,x^2,xu,u^2,x^2u,xu^2, x^2u^2\}$.
Set $J:=(g_0,g_1,g_2)\subset S:=k[x,u]$.
 
Consider the total order $>$ on the monomials of $S$ by decreeing
	$$1>x>u>x^2>xu>u^2>x^3>x^2u>xu^2>u^3>x^4>x^3u>x^2u^2> \cdots.$$
	Let $\ind(J)$ be the initial ideal $J$ with respect to the order $>$. Therefore $d_\pp =\dim (k[x,u]/\ind(J)).$

\begin{lem} \label{lemma16}  With the above notation, the equality $d_\pp=e_\pp$ holds except in the following cases:
	\begin{enumerate}
		\item[{\rm (i)}] $ J=(x,u)^2,$ in which case $d_\pp=3,\ e_\pp=4,$
		\item[{\rm (ii)}] $ J=(x^2+\lambda u^2+\mu xu, x^2u,xu^2), \lambda \neq 0,$ in which case $d_\pp=5,\ e_\pp=6.$
		\item[{\rm (iii)}] $  J=(xu+ \mu u^2+xu^2,  x^2+ \alpha xu+ \beta u^2, x^2u^2),\mu\neq 0,$ in which case one has
		$4\leq \deg \Bc\leq 5$ and $\sum_{x\in \Bc} e_x(I)\leq 6.$
	\end{enumerate}
\end{lem}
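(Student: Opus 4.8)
The plan is to classify, over the local ring $S=k[x,u]$, the ideals $J=(g_0,g_1,g_2)$ where each $g_j$ is a $k$-linear combination of the eight monomials $\{x,u,x^2,xu,u^2,x^2u,xu^2,x^2u^2\}$, organized by the initial-ideal computation with respect to the order $>$, and to decide in each case whether $d_\pp=\dim_k(S/\ind(J))$ equals $e_\pp$. Since $J\subset (x,u)^2$ has codimension $2$ in the two-dimensional regular local ring $S_{(x,u)}$, the multiplicity $e_\pp$ is computable as the length of $S/(\ell_1,\ell_2)$ for two general $k$-linear combinations $\ell_1,\ell_2$ of $g_0,g_1,g_2$, and $e_\pp=d_\pp$ exactly when $J_{(x,u)}$ is a complete intersection. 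So the dichotomy "$d_\pp=e_\pp$ versus not" is the same as "$J_{(x,u)}$ is a complete intersection versus not", and the content of the lemma is to pin down the non-complete-intersection $J$ and compute the two numbers in each exceptional family.

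The key steps, in order. First I would normalize: the $2\times 2$ block of coefficients of $g_0,g_1,g_2$ on the degree-one monomials $x,u$ has rank $0$, $1$, or $2$. Rank $2$ forces $J_{(x,u)}=(x,u)$, which has $d_\pp=e_\pp=1$ and is not among the three families (one also checks $\phi$ would fail the no-common-factor / dominance setup, but strictly we only need that it is in the "$d_\pp=e_\pp$" case). Rank $1$: after a linear change of the $g_j$ and of the coordinates we may assume one generator is $x$ (or a unit times a parameter congruent to $x$ mod higher order), and then $J_{(x,u)}=(x, h(u))$ where $h$ is the reduction of the remaining generators mod $x$; this is always a complete intersection, so $d_\pp=e_\pp$, unless the reductions mod $x$ all vanish to the point that the ideal drops — but careful bookkeeping shows that when rank is exactly $1$ the ideal is a complete intersection and no exceptional family appears here either (any genuinely exceptional behaviour needs the linear part to be rank $0$ or degenerate after elimination). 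Rank $0$: now $g_0,g_1,g_2\in (x,u)^2$, spanned by $\{x^2,xu,u^2,x^2u,xu^2,x^2u^2\}$, and this is where the three cases live. Reduce modulo $(x,u)^3$ first: the images of $g_0,g_1,g_2$ span a subspace $W\subseteq\langle x^2,xu,u^2\rangle$ of dimension $0$, $1$, $2$, or $3$. Dimension $3$ gives $J\supseteq(x,u)^2$ hence $J=(x,u)^2$: this is case (i), with $\ind(J)=(x^2,xu,u^2)$ so $d_\pp=\dim_k S/(x,u)^2=3$, and $e_\pp=4$ since two general combinations form an ideal of colength $4$ (e.g. $(x^2,u^2)$ has colength $4$). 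Dimension $2$: after coordinate change $W=\langle x^2,xu\rangle$ or $W=\langle xu,u^2\rangle$, i.e. $W=x\cdot\langle x,u\rangle$ up to swapping $x\leftrightarrow u$; two generators may be taken as $x^2+(\text{cubic}),\ xu+(\text{cubic})$ and the third generator lies in $(x,u)^3\cap(\text{the monomial span})=\langle x^2u,xu^2,x^2u^2\rangle$, and one then has to see which of these give a complete intersection. The generic subcase is a complete intersection ($d_\pp=e_\pp$); the exception, after normalizing the cubic tails using the two "leading" generators to clean up, is precisely case (iii): $J=(xu+\mu u^2+xu^2,\ x^2+\alpha xu+\beta u^2,\ x^2u^2)$ with $\mu\neq 0$; here I'd compute $\ind(J)$ directly with the order $>$ (the leading terms being $xu$, $x^2$, and $x^2u^2$ but the first two generators force also $u^3$ into $\ind(J)$ via $S$-polynomials, giving $\ind(J)=(x^2,xu,u^3)$? — this needs to be checked carefully, and is where the stated range $4\le\deg\Bc\le5$ comes from because $d_\pp$ depends on whether the $u^2$ term genuinely survives), then read off $d_\pp$ and bound $e_\pp\le 6$ via \eqref{eq:degform22} and $\sum e_x(I)\le 7$ together with the contribution of the extra simple point $q$. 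Dimension $1$: $W=\langle x^2+\lambda u^2+\mu xu\rangle$ up to scaling; $\lambda\neq 0$ is case (ii) (if $\lambda=0$ we are back to the "$W\ni$ a product of linear forms" situation handled under dimension $2$ by further normalization, leading to (iii) or to a complete intersection), and for (ii) the remaining two generators lie in $(x,u)^3\cap(\text{span})=\langle x^2u,xu^2,x^2u^2\rangle$, generically $x^2u$ and $xu^2$ up to the relation coming from multiplying the first generator; one gets $\ind(J)=(x^2,x^2u? \ldots)$ — again compute directly — yielding $d_\pp=5$, and $e_\pp=6$ because two general combinations of the three generators, when $\lambda\neq0$, cut out an ideal of colength $6$ (the conic $x^2+\lambda u^2+\mu xu$ being smooth/irreducible forces the intersection multiplicity at the origin to be $6$, not $5$). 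Dimension $0$ is impossible since $g_0,g_1,g_2$ are linearly independent and not all in $(x,u)^3$ (they have bidegree $(2,2)$ and lie in $(x,u)$; a dimension count on the eight available monomials shows at least one must have a nonzero degree-$\le 2$ part unless we are in an earlier rank case).

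The main obstacle I anticipate is the exhaustive but delicate normalization inside the rank-$0$ / $W$-of-dimension-$2$ branch: one must use the two "good" generators ($x^2+\cdots$, $xu+\cdots$) to perform row operations and coordinate changes that simultaneously simplify the cubic tails and the third generator, and then correctly separate the complete-intersection locus from the single exceptional family (iii), all while keeping track of how the initial ideal with respect to the \emph{specific} order $>$ (which ranks $1>x>u>x^2>\cdots$, so it is a local/anti-degree order) changes — in particular detecting the cases where an $S$-polynomial of two generators produces a new, lower-order leading monomial (like $u^3$ or $x^2u^2$) that raises $d_\pp$ and explains the ranges "$4\le\deg\Bc\le5$" in (iii). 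A secondary obstacle is justifying the exact values of $e_\pp$ ($4$ and $6$) in (i) and (ii): this is cleanest via the "two general linear combinations" characterization from \cite[Corollary 4.5.10]{BH93} already invoked after \eqref{degree-formula}, computing the colength of an explicit complete intersection like $(x^2,u^2)$ or $(x^2+\lambda u^2+\mu xu,\ \ell)$, the latter using that a smooth conic meets a general line through the origin with multiplicity $2$ but the \emph{pencil} structure here forces multiplicity $6$. The rank-$1$ and rank-$2$ linear-part cases, and the $W$-of-dimension $1$ or $3$ cases, are by contrast routine once set up, since there $J_{(x,u)}$ is visibly a complete intersection or visibly equal to $(x,u)^2$.
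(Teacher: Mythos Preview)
Your overall strategy---stratify first by the rank of the linear part of the $g_j$ on $\{x,u\}$, then (in the rank-$0$ case) by the dimension of the image $W$ of $J$ in $(x,u)^2/(x,u)^3$---is sound and is essentially the same kind of exhaustive local classification the paper carries out. The paper organizes the case analysis differently: it works directly with the initial ideal $\ind(J)$ for the fixed local order $>$, first disposing of the case $x\in\ind(J)$ or $u\in\ind(J)$ (your rank $\geq 1$), then splitting according to which of $xu,u^2$ lie in $\ind(J)$ (always $x^2\in\ind(J)$, by a colength argument). Your $\dim W=3,2,1$ correspond, after unwinding, to the paper's Cases~1, 2--3, and~4 respectively; the paper then dives into explicit normal forms of the generators and S-polynomial computations. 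So the two routes are close cousins: yours uses coordinate-free invariants to set up the stratification, the paper uses the Gr\"obner data throughout, but both end in the same hands-on normal-form reductions.

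That said, there is one genuine error and one gap in your plan. First, your claim that when $\dim W=2$ one may take $W=\ell\cdot\langle x,u\rangle$ up to coordinates is false: the $GL_2$-orbits of $2$-planes in $\langle x^2,xu,u^2\rangle$ are exactly two, those with a common linear factor and those without (e.g.\ $W=\langle x^2,u^2\rangle$). The second orbit does occur here---it is the paper's Case~3 ($xu\notin\ind(J)$, $u^2\in\ind(J)$)---and although it turns out always to give a complete intersection, you must treat it separately; the argument the paper uses (pulling $u^2$ out as a generator, then bounding $e_\pp$ via an explicit degree-$6$ complete intersection in a homogenized ring) is not suggested by your outline. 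Second, your reason for excluding $\dim W=0$ (``linearly independent and not all in $(x,u)^3$'') is wrong: three independent vectors in the $6$-dimensional span $\langle x^2,xu,u^2,x^2u,xu^2,x^2u^2\rangle$ can certainly all lie in the cubic part. The correct argument is the one the paper gives implicitly for $x^2\in\ind(J)$: if all generators lie in $(x,u)^2\cap(u)$ then $\ind(J)\subset(u)$, whence $S/J$ has infinite length, contradicting that $\Bc$ is zero-dimensional. Finally, your sketch for computing $e_\pp=6$ in case~(ii) (``the conic being smooth forces multiplicity $6$'') is not a valid argument; the paper instead shows in Case~4(c) that $u^3\in J$ directly, giving $\ind(J)$ and hence $d_\pp=5$, and reads $e_\pp=6$ from an explicit reduction.
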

\begin{proof} Write $d_\pp=n,\ n\geq 1.$
We will argue in terms of the initial ideal $\ind(J)$.
	
We first consider the easy case where $x\in \ind(J)$ or $u\in  \ind(J).$ The argument will be totally symmetric in the two cases, so it suffices to consider one of them, say, $x\in \ind(J).$ 
Then $u^n\in \ind(J)$ and $u^{n-1}\notin \ind(J),$ hence $ \ind(J)=(x,u^n).$ 
Letting then $h_1,h_2\in J$ be polynomials such that $\ind(h_1)=x, \ind(h_2)=u^n,$  $\{h_1,h_2\}$ will be a Gr\"obner basis of $J$. Therefore, $J$ is a complete intersection, hence $d_\pp=e_\pp.$

\smallskip

Next consider the case where neither $x\in \ind(J)$ nor $u\in\ind(J).$ Notice that $x^2\in \ind(J)$, as otherwise $\ind(J) \subset (u)$.
We now analyse all possibilities:  both $xu$ and $u^2$ belong to $\ind(J)$; $xu\in \ind(J)$ and $u^2\notin \ind(J)$; $xu\notin \ind(J)$ and $u^2\in \ind(J)$; and  neither $xu$ nor $u^2$ belongs to $\ind(J)$, respectively.

\smallskip 

\textbf{\underline{Case 1:}} $xu,u^2\in \ind(J).$  By the chosen order of the monomials, one must have $\{x^2,xu,u^2\}\subset J.$ But certainly $J\subset (x^2,xu,u^2)$ since it does not contain either $x$ or $u$ and further $\{x^2u,xu^2,x^2u^2\}\subset (x^2,xu,u^2)$.
This shows that that $J=(x,u)^2,$ in which case $d_\pp=3$ and $e_\pp=4.$

\smallskip
	
\textbf{\underline{Case 2:}} $xu\in \ind(J)$ and $u^2\notin \ind(J).$ Hence $e_\pp\geq d_\pp\geq 4.$ Write 
	\begin{equation*}
		g_0 = x^2+ \lambda u^2+xul, \ 
		g_1 =   xu+\mu u^2+xul', \
		g_2 =   xul''
	\end{equation*}
where $(\lambda,\mu)\neq (0,0)$ and $l,l',l''$belong to the $k$-vector space spanned by $x,u,xu$.

If $\mu=0$ (hence $\lambda\neq 0$) then $xu\in J$ and $x^2+\lambda u^2\in J$, thus showing that $e_\pp\leq 4,$ and hence that $d_\pp=e_\pp=4.$ 

If $\mu\neq 0,$ pick explicit coefficients for $l,l',l''$:
	\begin{align*}
		g_0 &= x^2+ \lambda u^2+a_1x^2u+b_1xu^2+c_1x^2u^2, \\
		g_1 &=   xu+ \mu u^2+a_2x^2u+b_2xu^2+c_2x^2u^2, \\
		g_2 &=   a_3x^2u+b_3xu^2+c_3x^2u^2.
	\end{align*}
We are led to consider the following sub-cases:
	\begin{enumerate}
		\item[(a)]  If $(a_3,b_3)=(0,1),$ then $xu^2\in J,$ hence $x^2u\in J.$ It follows  that $J\supset (x^2+\lambda u^2,xu+\mu u^2)$ which show that $e_\pp=4.$
		\item[(b)]  If $(a_3,b_3)=(1,\nu),$ then we write:
		\begin{align*}
			g_0& = x^2+ \lambda u^2+axu^2+bx^2u^2,\\
			g_1 &=   xu+ \mu u^2+cxu^2+dx^2u^2,\\
			g_2 &=   x^2u+\nu xu^2+ex^2u^2.
		\end{align*}
Therefore $xg_1-g_2=(\mu-\nu)xu^2+(c-e)x^2u^2+dx^3u^2.$ If $\mu\neq \nu$ then $xu^2\in J.$ Hence $J\supset (x^2+\lambda u^2,xu+\mu u^2)$ which shows that $e_\pp=4.$ Conversely, if $\mu=\nu=1,$ then we write:
		\begin{align*}
			g_0& = x^2 -\lambda xu+\alpha xu^2+\beta x^2u^2,\\
			g_1 &=   xu+  u^2+\gamma xu^2+\delta x^2u^2,\\
			g_2 &=   x^2u+ xu^2+\xi x^2u^2.
		\end{align*}
Taking $g_2-ug_0\in J$ gives  $xu^2\in J,$ therefore $J\supset (x^2+\lambda u^2,xu+ u^2)$ which shows that $e_\pp=4.$ 
		\item[(c)]  If $(a_3,b_3)=(0,0),$  then we write:
		\begin{align*}
			g_0& = x^2+ \lambda u^2+ax^2u+bxu^2,\\
			g_1 &=   xu+ \mu u^2+cx^2u+dxu^2,\\
			g_2 &=   x^2u^2.
		\end{align*} 
Since $x^2u^2\in I,$ $\Bc$ has only one prime $\pp,$ which shows that $\deg \Bc=d_\pp$ and $\sum_{x\in \Bc} e_x(I)=e_\pp.$	It is easy to see that $x^4\in J,$ hence $4 \leq d_\pp\leq 5.$ Moreover $(x,u)^4\subset J,$ hence $x^2u+\mu xu^2\in J.$ We can write
		\begin{align*}
			g_0& = x^2+ \lambda u^2+(b-a\mu)xu^2,\\
			g_1 &=   xu+ \mu u^2+(d-c\mu)xu^2,\\
			g_2 &=   x^2u^2.
		\end{align*} 
If $b-a\mu=d-c\mu=0$ then $J\supset (x^2+\lambda u^2, xu+\mu u^2)$ which shows that $d_\pp=e_\pp=4.$ Conversely, if $(b-a\mu)^2+(d-c\mu)^2\neq 0$ then 
	$$J\supset (xu+ \mu u^2+xu^2,  x^2+ \alpha xu+ \beta u^2),$$
	therefore $e_\pp\leq 6.$
	\end{enumerate}
	
	\textbf{\underline{Case 3:}} $xu\notin \ind(J)$ and $u^2\in \ind(J).$ Therefore $d_\pp\geq 4.$  We write 
	\begin{align*}
		g_0 &= x^2+ \lambda xu + xu(ax+bu+cxu),\\
		g_1 &=   u^2+xu(a'x+b'u+c'xu), \\
		g_2 &=   xu(a''x+b''u+c''xu).
	\end{align*}
Since $g_1-a'ug_0=u^2(1+\alpha x+\beta x^2+\gamma xu+\delta x^2u)\in J,$ hence $u^2\in J,$ therefore $x^2+\lambda xu+a x^2u\in J.$ It follows that
	$$(u^2,x^2+\lambda xu +a x^2u)\subset J.$$
Consider the codimension $2$ homogeneous ideal $G=(u^2, x^2z+\lambda xuz+a x^2u)\subset T:=k[x,u,z]$ obtained by homogenizing the two generators of the leftmost ideal in the above inclusion.
Then $T/G$ is a complete intersection of degree $6$ supported on two points in $\PP^2$, namely, $=(0:0:1)$ and $=(1,;0:0).$ Letting $\qq_1, \qq_2$ denote the respective defining prime ideals, one has $d_{\qq_2}=e_{\qq_2}=2,$ hence $d_{\qq_1}=e_{\qq_1}=4.$
	Since $e_\pp\leq  e_{\qq_1}=4,$ therefore $ e_\pp=d_\pp =4.$\\
	
	\textbf{\underline{Case 4:}} $xu,u^2\notin \ind(J).$ It is seen that $d_\pp\geq 5.$  We can write
	\begin{align*}
		g_0 &= x^2+ \lambda u^2 + \mu xu+ xu(ax+bu+cxu),\\
		g_1 &=   a_1x^2u+b_1xu^2+c_1x^2u^2, \\
		g_2 &=   a_2x^2u+b_2xu^2+c_2x^2u^2,
	\end{align*}
	where $\lambda \neq 0.$  
	Again, consider the following sub-cases:
	\begin{enumerate}
		\item[(a)] If $a_1=a_2=0,$ then $(b_1,b_2)\neq (0,0)$. Therefore, we obtain
		\begin{align*}
			g_0& = x^2+ \lambda u^2 + \mu xu+ ax^2u,\\
			g_1 &=   xu^2,\\
			g_2 &=   x^2u^2.
		\end{align*}
		Since  $g_2=xg_1,$ hence $J=(g_0,g_1)$ is a complete intersection, therefore $e_\pp=d_\pp$.
		\item[(b)] If $a_1=1$ and $b_2=0$. We obtain
		\begin{align*}
			g_0& = x^2+ \lambda u^2 + \mu xu+\alpha x^2u+\beta xu^2,\\
			g_1 &=   x^2u+\gamma xu^2,\\
			g_2 &=   x^2u^2.
		\end{align*}
		If $\gamma=0,$ then $J=(g_0,g_1)$ is a complete intersection, hence $e_\pp=d_\pp$. We deduce that $\gamma\neq 0$ and hence $xu^3=\gamma^{-1}(ug_1-g_2)\in J.$ 
		Write
		\begin{align*}
			g_0& = x^2+ \lambda u^2 + \mu xu+\alpha x^2u,\\
			g_1 &=   x^2u+\gamma xu^2,\\
			g_2 &=   x^2u^2.
		\end{align*}
		It is easy to see that $u^4\in J.$  If $u^3\notin \ind(J),$ then $d_\pp =e_\pp=7.$ Conversely, if $u^3\in \ind(J),$  then $5\leq d_\pp\leq 6.$ Moreover, since $xu^2\notin \ind(J)$, we obtain $\ind(J)=(x^2,u^3).$ Therefore, there exists a Gr\"obner basis of $J$ of two polynomials, hence $J$ is a complete intersection.
		\item[(c)] If $a_1=1$ and $b_2=1$. We obtain
		\begin{align*}
			g_0& = x^2+ \lambda u^2 + \mu xu+ \alpha x^2u^2,\\
			g_1 &=   x^2u+\beta x^2u^2=x^2u(1+\beta u),\\
			g_2 &=   xu^2+\gamma x^2u^2=xu^2(1+\gamma x).
		\end{align*}
		It follows that $x^2u,xu^2\in J.$ We write 
		$$ug_0=x^2u+\lambda u^3+\mu xu^2+a x^2u^3\in J,$$
		which shows that $u^3\in J.$ It follows that $d_\pp=5$ and $e_\pp=6.$ In this case
		$$J=(x^2+\lambda u^2+\mu xu, x^2u,xu^2), \lambda \neq 0.$$
	\end{enumerate}
\end{proof}

Now, we derive consequences of the above technical lemma and the degree formula \eqref{eq:degform22}.

\begin{cor}\label{proposition5} Let $\phi:\PP^1\times \PP^1 \dto \PP^2$ be a dominant rational map given by bihomogeneous polynomials $f_0,f_1,f_2\in R$ of bidegree $(2,2)$ without common factor in $R\setminus k$. If $\phi$ is birational then $\deg \Bc= 6$.
\end{cor}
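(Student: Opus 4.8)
The plan is to combine the degree formula \eqref{eq:degform22} with the case analysis of Lemma~\ref{lemma16} and the structural result Lemma~\ref{lemmadeg6}. Since $\phi$ is birational we have $\deg(\phi)=1$, so \eqref{eq:degform22} gives $\sum_{x\in\Bc}e_x(I)=7$; and by Lemma~\ref{lemmadeg6} we know $\deg\Bc\leq 6$. It therefore suffices to rule out the possibilities $\deg\Bc\leq 5$ under the constraint $\sum_{x\in\Bc}e_x(I)=7$. Since $e_\pp\geq d_\pp$ for each base point, a deficiency $\sum e_\pp-\sum d_\pp=7-\deg\Bc\geq 2$ forces at least one base point $\pp$ with $e_\pp>d_\pp$, and possibly several; the engine of the proof is that Lemma~\ref{lemma16} catalogues \emph{all} such points, and in each listed case the local excess $e_\pp-d_\pp$ is small (either $1$ or, in case (iii), bounded together with global constraints).

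First I would set up bookkeeping: write $\deg\Bc=\sum_\pp d_\pp$ and $\sum_{x\in\Bc}e_x(I)=\sum_\pp e_\pp=7$, so $\sum_\pp(e_\pp-d_\pp)=7-\deg\Bc$. Each term $e_\pp-d_\pp$ is $0$ unless $\pp$ falls into one of the three exceptional cases of Lemma~\ref{lemma16}, where it equals $1$ in case (i), $1$ in case (ii), and in case (iii) we have the additional information $4\leq \deg\Bc\leq 5$, $\sum e_x(I)\leq 6$ — which is incompatible with $\sum e_x(I)=7$, so case (iii) is excluded outright for a birational $\phi$. So every exceptional base point contributes excess exactly $1$, and hence $7-\deg\Bc$ equals the number of exceptional base points.

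Next I would bound the number of exceptional points. In case (i), $I_\pp$ is the square of the maximal ideal, so locally $I\subset \pp^2$; in case (ii), $J=(x^2+\lambda u^2+\mu xu,\, x^2u,\, xu^2)$ and one checks $d_\pp=5$, so a single such point already has $\deg$ contribution $5$. If $\deg\Bc=5$ then $7-\deg\Bc=2$, forcing two exceptional points; but two distinct exceptional points contribute at least $d_{\pp_1}+d_{\pp_2}\geq 3+3=6>5$ (the minimum $d_\pp$ among exceptional cases is $3$, from case (i)), a contradiction. If $\deg\Bc=4$ then $7-\deg\Bc=3$, forcing three exceptional points with $\sum d_\pp\geq 3+3+3=9>4$, again absurd; similarly $\deg\Bc\leq 3$ requires $\geq 4$ exceptional points, even more impossible. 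This leaves only $\deg\Bc=6$, as claimed.

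The main obstacle I anticipate is making the arithmetic in the previous paragraph airtight: one must be sure that Lemma~\ref{lemma16} really exhausts all sources of excess multiplicity (it does, by its statement: ``$d_\pp=e_\pp$ holds except in the following cases''), and one must correctly extract the value of $d_\pp$ in each exceptional case — in particular noting that $d_\pp\geq 3$ always when $e_\pp>d_\pp$, so that the total point-degree from two or more exceptional points already exceeds any $\deg\Bc\leq 5$. A secondary subtlety is handling case (iii): rather than tracking its excess, one uses its built-in bound $\sum_{x\in\Bc}e_x(I)\leq 6$ to contradict $\sum_{x\in\Bc}e_x(I)=7$ directly. Once these local inputs are organized, the proof is a short counting argument.
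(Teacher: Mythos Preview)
Your proposal is correct and is essentially the argument the paper has in mind: the corollary is stated without proof as an immediate consequence of Lemma~\ref{lemma16} and the degree formula, and your counting argument is exactly how one unpacks that. A marginally quicker bookkeeping is to note that any exceptional point in cases (i) or (ii) has $e_\pp\geq 4$, so two of them would already force $\sum_{x\in\Bc}e_x(I)\geq 8>7$; hence at most one exceptional point occurs, the total excess $\sum_\pp(e_\pp-d_\pp)$ is at most $1$, and $\deg\Bc\geq 6$ follows directly---but this is the same idea as yours.
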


\begin{thm} \label{Theorem20}
	Let $\phi:\PP^1\times \PP^1 \dto \PP^2$ be a dominant rational map given by bihomogeneous polynomials $f_0,f_1,f_2\in R$ of bidegree $(2,2)$ without common factor in $R\setminus k$. Assume that the point in $\Bc$ with the largest multiplicity is the point $\pp:=(0,1)\times (0,1)$. Then, with the notation established in the beginning of the section, $\phi$ is birational if and only if $\deg \Bc=6$ and  $J=(x,u)^2$ or $$J=(x^2+\lambda u^2+\mu xu, x^2u,xu^2), \ \lambda \neq 0.$$
\end{thm}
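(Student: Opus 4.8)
The plan is to combine the structural results already established: Corollary~\ref{proposition5} tells us that birationality forces $\deg\Bc=6$, and Lemma~\ref{lemmadeg6} tells us that in that case $I$ is perfect with a $2\times 3$ resolution, so $\Bc$ is the degeneracy locus of a $3\times 2$ matrix of biforms of bidegree $(1,1)$. On the other hand, Lemma~\ref{lemma16} classifies, at the point $\pp$ of largest multiplicity, all the local configurations where $e_\pp>d_\pp$; since we will need $\sum_{x\in\Bc}e_x(I)=8-\deg(\phi)=7$ and $\deg\Bc=6$, exactly one base point must have $e_\pp=d_\pp+1$, and that point is by hypothesis $\pp$. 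So the first step is to run through the cases of Lemma~\ref{lemma16} and see which ones are compatible with $\deg\Bc=6$ and $\sum e_x(I)=7$: case (i) gives $d_\pp=3, e_\pp=4$, which is the $J=(x,u)^2$ situation; case (ii) gives $d_\pp=5, e_\pp=6$; case (iii) forces $\sum e_x(I)\le 6$, hence $\deg(\phi)\ge 2$, so it cannot occur. This already pins down $J$ to one of the two listed forms whenever $\phi$ is birational.

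The converse direction is the substantive part. I would assume $\deg\Bc=6$ and that $J$ has one of the two stated forms, and show $\deg(\phi)=1$. By the degree formula \eqref{eq:degform22} it suffices to show $\sum_{x\in\Bc}e_x(I)=7$. Since $\deg\Bc=6$, for every base point $\qq$ we have $e_\qq\ge d_\qq$, and $\sum d_\qq=6$; so I must show that the total excess $\sum_\qq(e_\qq-d_\qq)$ equals exactly $1$. The hypothesis that $\pp$ is the point of largest multiplicity, together with the explicit form of $J$, gives $e_\pp-d_\pp=1$ in either listed case (directly from Lemma~\ref{lemma16}(i) or (ii)). The remaining point is to argue that at every other base point $\qq$ the ideal $I_\qq$ is a complete intersection, i.e.\ $e_\qq=d_\qq$: this is exactly where ``$\pp$ has the largest multiplicity'' is used — if some other $\qq$ had $e_\qq>d_\qq$ then, because the multiplicity excesses are non-negative integers and sum to at most $8-\deg\Bc=2$ with $\deg(\phi)\ge 1$, one would get $e_\qq-d_\qq\ge 1=e_\pp-d_\pp$ forcing $e_\qq-d_\qq=1$ and $e_\pp-d_\pp\le 1$; but then Lemma~\ref{lemma16} applied at $\qq$ would have to land in one of its exceptional cases too, and a short bookkeeping with the values $(d,e)\in\{(3,4),(5,6)\}$ and the constraints $\sum d_\qq=6$, $\sum e_\qq=7$ rules out two distinct exceptional points. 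Hence $\sum e_x(I)=7$ and $\deg(\phi)=1$.

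Concretely I would organize the converse as follows. First, from $\deg\Bc=6$ and Lemma~\ref{lemmadeg6}, record that $I$ is perfect of the stated resolution type, so in particular $I$ is unmixed of codimension $2$ and $\Bc$ is arithmetically Cohen--Macaulay; this legitimizes the local analysis point by point. Second, compute $e_\pp$ at the distinguished point using Lemma~\ref{lemma16}: in case $J=(x,u)^2$ we get $(d_\pp,e_\pp)=(3,4)$, and in the quadric case $(d_\pp,e_\pp)=(5,6)$. Third, observe $\sum_{\qq\in\Bc} d_\qq=\deg\Bc=6$, so in the first case the remaining base points contribute $d$-degree $3$, in the second case $d$-degree $1$; in either case, by the ``largest multiplicity'' hypothesis, every other $\qq$ satisfies $e_\qq-d_\qq\le e_\pp-d_\pp=1$, and I claim in fact $e_\qq=d_\qq$ there. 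Fourth, conclude $\sum e_x(I)=\sum d_x+1=7$, hence $\deg(\phi)=8-7=1$ by \eqref{eq:degform22}, i.e.\ $\phi$ is birational.

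The main obstacle I anticipate is the third step: ruling out a \emph{second} point with strictly positive multiplicity excess. The inequality $e_\qq-d_\qq\le 1$ alone does not immediately give $e_\qq=d_\qq$; one genuinely needs the arithmetic that $\sum_\qq(e_\qq-d_\qq)=8-\deg(\phi)-\deg\Bc=2-\deg(\phi)\le 1$, combined with $\pp$ already contributing $1$. Once that is written cleanly the excess at every other point is forced to be $0$, but making this airtight requires that $\deg(\phi)\ge1$ always (true, as $\phi$ is dominant with $2$-dimensional image) and that the excess is a non-negative integer at each point (standard, $e_\qq\ge d_\qq$ by \cite[\S4.5]{BH93}). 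A secondary, more bookkeeping-heavy point is verifying that the two listed forms of $J$ are genuinely the only survivors of Lemma~\ref{lemma16} under the constraints $\deg\Bc=6$, $\sum e_x(I)=7$ — in particular, that case (iii) of Lemma~\ref{lemma16}, with its $\sum e_x(I)\le 6$, is incompatible with birationality, which is immediate from \eqref{eq:degform22} since it would force $\deg(\phi)\ge 2$.
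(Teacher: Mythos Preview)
Your approach is essentially the paper's: forward direction via Corollary~\ref{proposition5}, the degree formula \eqref{eq:degform22}, and Lemma~\ref{lemma16}; converse by verifying $\sum_{x\in\Bc}e_x(I)=7$ through control of the excesses $e_x-d_x$. Your arithmetic for the converse (total excess $=2-\deg(\phi)\le 1$, while $\pp$ already contributes $1$, so every other base point is a local complete intersection) is in fact a bit more streamlined than the paper's, which splits into the two cases $J=(x,u)^2$ and $J=(x^2+\lambda u^2+\mu xu,x^2u,xu^2)$ and, in the first case, reinvokes Lemma~\ref{lemma16} at the remaining points to reach the same conclusion.

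One caution: in your first step of the converse you appeal to Lemma~\ref{lemmadeg6} to conclude that $I$ is perfect, but the proof of that lemma assumes $\phi$ birational throughout (the ``additional simple point $q$'' is precisely the unique preimage of a general target point), so you cannot cite it before birationality is established. Fortunately this step is decorative in your argument --- the local computations of $(d_\pp,e_\pp)$ via Lemma~\ref{lemma16} and the global excess inequality need nothing about perfection of $I$ --- so simply drop that sentence and the proof goes through.
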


\begin{proof}  First, assume that $\phi$ is birational. Corollary \ref{proposition5} shows that  $\deg(\Bc)=6$. Moreover, by \eqref{eq:degform22} we have $\sum_{x\in \Bc} e_\pp=7$ and hence Lemma~\ref{lemma16} implies that $(d_\pp,e_\pp)=(3,4)$ or $(d_\pp,e_\pp)=(5,6)$, that is to say $J=(x,u)^2$ or $J=(x^2+\lambda u^2+\mu xu, x^2u,xu^2), \lambda \neq 0$, as claimed.

For the converse it suffices to prove that if $\deg \Bc=6$ and  $J=(x,u)^2$ or $J=(x^2+\lambda u^2+\mu xu, x^2u,xu^2), \lambda \neq 0,$ then $\phi$ is birational, i.e.~$\sum_{x\in \Bc}e_x(I)=7$. We now analyse these two possibilities.
	
	\smallskip
	
\textbf{\underline{Case 1:}}  Suppose that $\deg \Bc=6$ and $J=(x,u)^2\in \Bc.$ Let $\{\pp_1,\ldots,\pp_r\}$ denote the primes of $\Bc$ other than $\pp=(x,u)$.
Since $d_{\pp}=3$, then  $\sum_{i=1}^{r} d_{\pp_i}=3.$ 
Thus, in order to have the total sum $\sum_{x\in \Bc}e_x(I)=7$ is now tantamount to having $J_{\pp_i}$ a complete intersection for every $i=1,\ldots,r.$ But this is clear because Lemma~\ref{lemma16} shows that otherwise $d_{\pp_i}=3$ and $e_{\pp_i}=4$, for every $i=1,\ldots,r.$
	
	\smallskip
	
\textbf{\underline{Case 2:}}  Suppose that $\deg \Bc=6$ and $J=(x^2+\lambda u^2+\mu xu, x^2u,xu^2), \lambda \neq 0.$ Since $\deg \Bc=6,$  Lemma~\ref{lemma16} implies that $\Bc$ has only one prime $\qq$  other than $\pp=(x,u)$, with $d_\qq=1$ and $e_\qq=1.$ Therefore $\sum_{x\in \Bc}e_x(I)=7$ as required.
\end{proof}

By  Lemma~\ref{lemmadeg6}, if $\phi:\PP^1\times \PP^1 \dto \PP^2$ is a birational map defined by bihomogeneous polynomials $f_0,f_1,f_2\in R$ of bidegree $(2,2)$ without common factor in $R\setminus k$, then, $I=(f_0,f_1,f_2)$ is perfect ideal with exactly two minimal syzygies, of bidegree $(1,1)$. Indeed, 
the free resolution of $I$ is of the form 
\begin{displaymath}
\xymatrix{ 0 \ar[r] & R(-3,-3)^2\ar[r] ^M& R(-2,-2)^3  \ar[r] & I\ar[r]& 0.   }
\end{displaymath}

To understand the shape of the matrix $M$, we consider three cases.
	
	\smallskip
	
\textbf{\underline{Case 1:}} Suppose that $\Bc=\{\pp_1, \pp_2,\pp_3\} \subset \PP^1\times \PP^1$ with $d_{\pp_1}=3$, $d_{\pp_2}=2$ and $d_{\pp_3}=1$.  By a suitable coordinate change,  one can assume without loss of generality that the three primes are $\pp_1= (x,u)$, $\pp_2=(y,v)$ and $\pp_3=(x+y, u+ v)$.
Accordingly
$$I\subset (x,u)^2\cap (y^2,v)\cap (x+y, u+ v).$$
Now,  $I$ is generated by elements of bidegree $(2,2)$.
A computation with \texttt{Macaulay2} gives that
$$\{x^2v(u+v),\, xuv(x+y),\, y^2u^2- xyuv\}$$
are the only forms of bidegree $(2,2)$ in the variables $x,y,u,v$.
In particular, $I$ must be contained in the ideal generated by these three forms, and hence coincides with it.

These three forms are the $2$-minors of the following $3\times 2$ matrix 
	$$M=\begin{pmatrix} u(x+y) & yu\\   -x( u+ v) &  - yu\\ 0  &  xv  
		\end{pmatrix}.$$

	\smallskip
			
\textbf{\underline{Case 2:}}  Suppose that $\Bc=\{\pp_1,\pp_2,\pp_3,\pp_4\}\subset \PP^1 \times \PP^1$ with $d_{\pp_1}=3$ and $d_{\pp_i}=1$, $i=2,3,4$. By the same token as in the first case, we may assume that $\pp_1= (x,u),\ \pp_2=(y,v)$, $\pp_3=(x+y,u+v)$ and $\pp_4=(ax+y, \alpha u+ v)$, for suitable coefficients $a,\alpha\in k$ with $(a,\alpha )\neq (1,1)$.
Accordingly,
		$$I\subset (x,u)^2\cap (y,v)\cap (x+y,u+v)\cap (ax+y, \alpha u+v).$$
Repeating the same computational device as in the first case, one obtains that $I$ is generated by the following forms
	{\small 
		$$\{(yu-xv)(\alpha xu+xv),  
		(axu+yu)(yu-xv),(\alpha - a)x^2uv+(\alpha-1) xyuv-(a-1)x^2v^2\}.
		$$	
		}
	Once again, one can verify that these forms are the $2$-minors of the $3\times 2$ matrix 
		$$M=\begin{pmatrix} (ax+y)u & (a-1)xv\\  -x(\alpha u+ v)   &   -(\alpha -1 )xv\\ 0&   yu-xv  
		\end{pmatrix}.$$
		
If $\alpha \neq a$, the three minors have no factor in common and it follows that $M$ provides the free resolution of $I$. If $\alpha =a$, then all elements of bidegree $(2,2)$ in 
$(x,u)^2\cap (y,v)\cap (x+y,u+v)\cap (ax+y, a u+v)$ are multiple of $yu-xv$, contradicting the hypothesis that the $f_i$'s have no common factor.
		\smallskip
			
\textbf{\underline{Case 3:}} Suppose that $\Bc=\{\pp,\qq\}\subset \PP^1 \times \PP^1$ with $d_{\pp}=5$ and $d_{\qq}=1$. Always by the same token,  we may assume that $\pp=(x,u)$ and $\qq=(y,v)$. Accordingly, one has $I\subset (x^2+\lambda u^2+\mu xu, x^2u,xu^2)\cap (y,v)$, for suitable $\lambda\in k \setminus 0$. Since then 
		$$I\subset (x^2v^2+\lambda y^2u^2+\mu xyuv, x^2uv,xyu^2),$$
		it must be generated by these three forms of bidegree $(2,2)$.
	
As before, direct inspection shows that $I$ is perfect with syzygy matrix
		$$M=\begin{pmatrix} xu& 0\\ - xv  &  yu \\ \lambda yu+\mu xv & xv \end{pmatrix}.$$

\section{Modeling: tensor-product maps in the plane}\label{sec:applications}

In this section we will explore the consequences of our previous results to the field of geometric modeling. Indeed, in this field bigraded rational maps are intensively used to describe parameterizations of curves, surfaces and volumes, including plane parameterizations. For that purpose, the Bernstein basis is preferred to the usual power basis for representing polynomials. Recall that the homogeneous Bernstein polynomials are defined by the formula 
$$B_i^n(x,y)=\binom{n}{i}y^i(x-y)^{n-i}.$$
They are homogeneous of degree $n$ and any homogeneous polynomial of degree $n$ can be written as a linear combination of them. Consequently, a bihomogeneous polynomial of bidegree $(a,b)$ can be written as a linear combination of all the products $B_i^a(x,y)B_j^b(u,v)$, $i=0,\ldots,a$ and $j=0,\ldots,b$. Rational maps written in this basis are dubbed \emph{tensor-product B\'ezier parameterizations}.

It turns out that an important property of tensor-product B\'ezier parameterization is to guarantee their birationality. Moreover, an even more important property is to preserve this birationality property during a design process, that is to say when the coefficients of the defining polynomials are continuously modified (see e.g.~Figure \ref{fig:ex12}). In what follows, we will show how Theorem \ref{Theorem8} and Proposition \ref{prop:1-1} allow to translate the detection of birationality as rank decision problems in the case of tensor-product parameterizations of bidegree (1,1) and (1,2).

\subsection{Plane tensor-product parameterizations}

For defining a bigraded rational map of bidegree $(a,b)$ in Bernstein form we need to introduce a collection of \emph{control points} $P_{i,j}=(x_{i,j}, y_{i,j}) \in \RR^2$ and their associated weights $w_{i,j} \in \RR$. The map is then defined as
\begin{eqnarray}\label{eq:bigradedmap}
	\phi : \PP^1\times \PP^1 & \dashrightarrow & \PP^2 \\ \nonumber
	(x:y)\times (u:v) & \mapsto & \left( 
	\sum_{i,j} w_{i,j} B^{a,b}_{i,j}:
	\sum_{i,j} w_{i,j}x_{i,j}B^{a,b}_{i,j} : 
	\sum_{i,j} w_{i,j}y_{i,j}B^{a,b}_{i,j}
	       \right)
\end{eqnarray}
where $B^{a,b}_{i,j}:=B_i^a(x,y)B_j^b(u,v)$. Observe that $\phi$ ``interpolates'' the control points, in the sense that
$$\phi\left((1:i)\times(1:j)\right)=(w_{i,j}:w_{i,j}x_{i,j}:w_{i,j}y_{i,j}), \ i=0,1, \ j=0,1.$$
In addition, if all the weights are equal to 1 then $\sum_{i,j} w_{i,j} B^{a,b}_{i,j}=1$, so that the control points $P_{i,j}$ fully control the map $\phi$. 

In general, the control points are the only coefficients of the map $\phi$ that are modified during a hand-design process because they really provide an intuitive way to reshape the parameterization $\phi$. The weights are hidden behind and not used as an intuitive design tool. When the control points of a given birational parameterization are moved, then the new parameterization is in general no longer a birational parameterization. Below, we will illustrate how the weights of the map $\phi$ can be changed in order to retrieve a birational map without touching again to the control points modified by the designer.

\subsection{Bilinear tensor-product parameterizations}
Consider a rational map as defined in \eqref{eq:bigradedmap} with $(a,b)=(1,1)$. By Proposition \ref{prop:1-1}, this rational map will be birational if and only if there exists a syzygy of bidegree (1,0), or equivalently a syzygy of bidegree (0,1). Writing this condition under a linear system in the Bernstein basis, we obtain the following matrix whose kernel yields those bidegree (1,0) syzygies:
$$M:=\left(\begin{array}{cccccc} 
	 {x_{0,0}}  {w_{0,0}}&
     0&
      {y_{0,0}}  {w_{0,0}}&
     0&
     w_{0,0}&
     0\\
      {x_{0,1}}  {w_{0,1}}&
     0&
      {y_{0,1}}  {w_{0,1}}&
     0&
     w_{0,1}&
     0\\
     \frac{1}{2}  {x_{1,0}}  {w_{1,0}}&
     \frac{1}{2}  {x_{0,0}}  {w_{0,0}}&
     \frac{1}{2}  {y_{1,0}}  {w_{1,0}}&
     \frac{1}{2}  {y_{0,0}}  {w_{0,0}}&
     \frac{1}{2}  {w_{1,0}}&
     \frac{1}{2}  {w_{0,0}}\\
     \frac{1}{2}  {x_{1,1}}  {w_{1,1}}&
     \frac{1}{2}  {x_{0,1}}  {w_{0,1}}&
     \frac{1}{2}  {y_{1,1}}  {w_{1,1}}&
     \frac{1}{2}  {y_{0,1}}  {w_{0,1}}&
     \frac{1}{2}  {w_{1,1}}&
     \frac{1}{2}  {w_{0,1}}\\
     0&
      {x_{1,0}}  {w_{1,0}}&
     0&
      {y_{1,0}}  {w_{1,0}}&
     0&
     w_{1,0}\\
     0&
      {x_{1,1}}  {w_{1,1}}&
     0&
      {y_{1,1}}  {w_{1,1}}&
     0&
     w_{1,1}\\
\end{array}\right).$$
As a consequence, the map is birational if and only if $\det(M)=0$. Now, using the Laplace expansion formula of determinants by $3\times 3$-blocks with respect to columns 1,3,5 and 2,4,6, we get the condition :
{\small
\begin{multline*}
\left( w_{1,0}w_{0,1} \left| \tilde{P}_{0,0}\tilde{P}_{0,1}\tilde{P}_{1,0}\right|\cdot \left|\tilde{P}_{0,1}\tilde{P}_{1,0}\tilde{P}_{1,1}\right|
- w_{1,1}w_{0,0} \left| \tilde{P}_{0,0}\tilde{P}_{0,1}\tilde{P}_{1,1}\right|\cdot \left|\tilde{P}_{0,0}\tilde{P}_{1,0}\tilde{P}_{1,1}\right|
\right)\times \\ 
w_{0,0}w_{0,1}w_{1,0}w_{1,1} =0	
\end{multline*}
}

\vspace{-1em}

\noindent where $\tilde{P}_{i,j}$ is the vector $(x_{i,j},y_{i,j},1)=(P_{i,j},1)$. Weights are in general assumed to be nonzero; therefore, under this assumption we recover the following condition that already appeared in the recent paper \cite{SZ}:
\begin{equation}\label{eq:criterion11}
\frac{w_{1,0}w_{0,1}}{w_{1,1}w_{0,0}}=
\frac{\left| \tilde{P}_{0,0}\tilde{P}_{0,1}\tilde{P}_{1,1}\right|\cdot \left|\tilde{P}_{0,0}\tilde{P}_{1,0}\tilde{P}_{1,1}\right|}{\left| \tilde{P}_{0,0}\tilde{P}_{0,1}\tilde{P}_{1,0}\right|\cdot \left|\tilde{P}_{0,1}\tilde{P}_{1,0}\tilde{P}_{1,1}\right|}.	
\end{equation}
From here, it appears clearly that given the control points, a suitable modification of a single weight so that \eqref{eq:criterion11} holds, allow to obtain a birational map \cite{SZ}.

\subsection{Bidegree (1,2) tensor-product parameterizations}
Now, consider a bilinear rational map as defined in \eqref{eq:bigradedmap} with $(a,b)=(1,2)$. By our previous results, this rational map will be birational if and only if there exists a syzygy of bidegree (0,1). Proceeding as in the previous case of bilinear maps, we obtain the following multiplication matrix
\begin{equation}\label{eq:genMatrix}
M :=
\left( 
\begin{array}{cccccc}
w_{0,0}x_{0,0} & 0 & w_{0,0}y_{0,0} & 0 & w_{0,0} & 0 \\	
\frac{2}{3}w_{0,1}x_{0,1} & \frac{1}{3}w_{0,0}x_{0,0} & \frac{2}{3}w_{0,1}y_{0,1} & \frac{1}{3}w_{0,0}y_{0,0} &\frac{2}{3}w_{0,1} & \frac{1}{3}w_{0,0} \\	
\frac{1}{3}w_{0,2}x_{0,2} & \frac{2}{3}w_{0,1}x_{0,1} & \frac{1}{3}w_{0,2}y_{0,2} & \frac{2}{3}w_{0,1}y_{0,1} & \frac{1}{3}w_{0,2} & \frac{2}{3}w_{0,1} \\ 	
0 & w_{0,2}x_{0,2} & 0 & w_{0,2}y_{0,2} & 0 & w_{0,2} \\
w_{1,0}x_{1,0} & 0 & w_{1,0}y_{1,0} & 0 & w_{1,0} & 0 \\	
\frac{2}{3}w_{1,1}x_{1,1} & \frac{1}{3}w_{1,0}x_{1,0} & \frac{2}{3}w_{1,1}y_{1,1} & \frac{1}{3}w_{1,0}y_{1,0} &\frac{2}{3}w_{1,1} & \frac{1}{3}w_{1,0} \\	
\frac{1}{3}w_{1,2}x_{1,2} & \frac{2}{3}w_{1,1}x_{1,1} & \frac{1}{3}w_{1,2}y_{1,2} & \frac{2}{3}w_{1,1}y_{1,1} & \frac{1}{3}w_{1,2} & \frac{2}{3}w_{1,1} \\ 	
0 & w_{1,2}x_{1,2} & 0 & w_{1,2}y_{1,2} & 0 & w_{1,2} 
\end{array}
\right).	
\end{equation}
It is $8\times 6$-matrix and $\rank(M)<6$ if and only if the corresponding map is birational. Similarly, the analysis of bidegree $(1,1)$ syzygies leads to a square $12\times 12$-matrix with the property that its rank drops by 3 if and only if the corresponding map is birational. Therefore, the decision of birationality is not given by a single polynomial condition as in the previous case of bilinear maps. Nevertheless, our syzygy-based formulation of birationality by means of the rank of the matrix $M$ translates birationality decision to a \emph{rank decision problem}. This opens a bridge to the field of numerical linear algebra where a huge amounts of works on this problem have been done during the last decades. In the following, we illustrate this link on an example with the help of a recent algorithm for structured low-rank approximation \cite{SS15}.

\medskip

We start with the canonical non-rational tensor-product parameterization of the plane, i.e.~all the weights are set to 1 and the control points have a rectangular shape. More precisely, we set $P_{i,j}=(x_{i,j},y_{i,j})=(i,j)$ for all $i=0,1$ and $j=0,1,2$, which is illustrated on the left side of Figure \ref{fig:ex12}.  
This initial parameterization is birational, which can be checked by observing that the matrix $M$ specialized to this setting
has rank 5. 
Now, as illustrated in Figure \ref{fig:ex12}, suppose that these control points are "moved" in order to reach the following new coordinates :
$$P_{0,0}=(0,0), P_{0,1}=(-1/2,1), P_{0,2}=(0,2),$$
$$P_{1,0}=(2,-1/2), P_{1,1}=(5/2,1),P_{1,2}=(2,5/2).$$
\begin{figure}
\centering
\includegraphics[width=11cm]{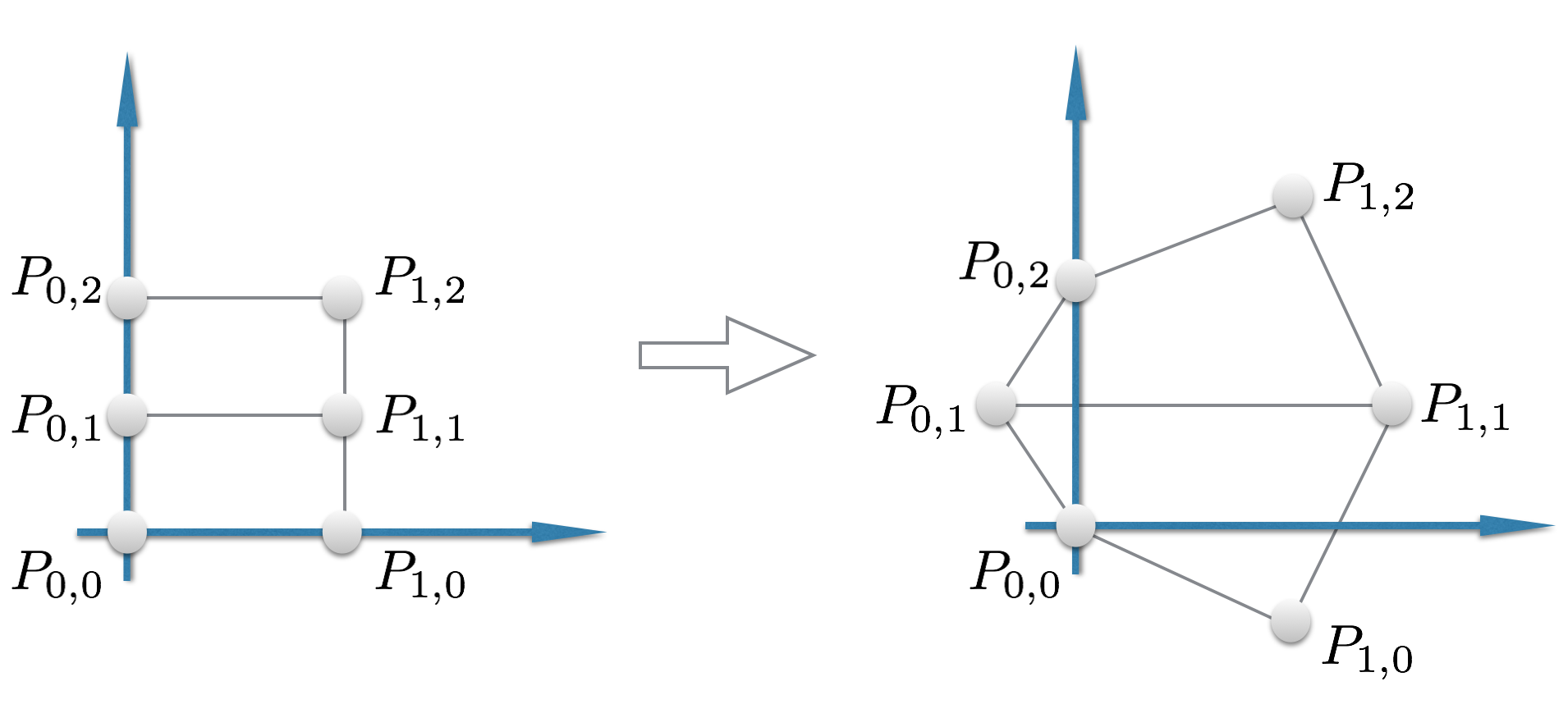}
\caption{Change of control points in a tensor-product parameterization of bidegree (1,2).}\label{fig:ex12}
\end{figure}

\vspace{-1em}

\noindent If the weights are left unchanged, i.e.~all equal to 1, then this new parameterization is no longer birational. Indeed, it is straightforward to check that the matrix $M$ specialized with these new control points and all weights equal to 1 has rank 6. 
So, we aim at changing the weights $w_{i,j}$, without changing the control points, so that the parameterization becomes rational. For that purpose, we will apply the structured low-rank approximation algorithm developed in  \cite{SS15}.

\medskip

Given a matrix $M$, the basic idea of structured low-rank approximation is to compute a matrix $M'$ of given rank $r$ in a linear subspace $E$ of matrices such that the distance, in the sense of the Frobenius norm, between $M$ and $M'$ is small. Such an algorithm, based on Newton-like iterations, is given in \cite{SS15}. In our context, by \eqref{eq:genMatrix} the matrix $M$ can be written as
$$M=w_{0,0}E_{0,0}+w_{0,1}E_{0,1}+w_{0,2}E_{0,2}+w_{1,0}E_{1,0}+w_{1,1}E_{1,1}+w_{1,2}E_{1,2}$$
where the $E_{i,j}$'s are matrices of size $8\times 6$ whose entries only depend on the control points. These latter define a linear subspace of matrices and we are looking for a matrix $M'$ such that $M'$ belongs to this linear subspace and its rank is lower or equal to 5. Thus, applying the algorithm in \cite{SS15}, we find the following weights, up to numerical precision :
\medskip
\begin{center}
\begin{tabular}{ccc}
	                   $w_{0,0}\approx0.949726775368655$, & & 
	                   $w_{0,1}\approx1.0867765091791244$, \\
	                   $w_{0,2}\approx0.9521336386754828$, & & 
	                   $w_{1,0}\approx1.0233828904581144$,\\
	                   $w_{1,1}\approx0.9458573850234199$, & & 
	                   $w_{1,2}\approx1.0259764181881534$,
\end{tabular}	
\end{center}    
\medskip
Therefore, by modifying the weights with the above values, the parameterization becomes birational ``up to numerical precision'', which means in practice that its 5-minors yield inversion formulas for almost all points, up to numerical precision.

\section{Acknowledgments}

 All authors are partially supported by the Math-AmSud program called SYRAM (Geometry of SYzygies of RAtional Maps with applications to geometric modeling). The fourth named author was additionally supported by a CNPq grant (300586/2012-4) and by a ``p\'os-doutorado no exterior''. The fifth named author was also additionally supported by a CNPq grant  (302298/2014-2) and a PVNS Fellowship from CAPES (5742201241/2016). Most of the computations were done with  \texttt{Macaulay2} \cite{M2}, as an important aspect of this work.


\end{document}